\def\th@plain{%
	\thm@notefont{}
	\itshape 
}
\def\th@definition{%
	\thm@notefont{}
	\normalfont 
}
\newtheorem{theorem}{Theorem}[section]
\newtheorem{lemma}[theorem]{Lemma}
\newtheorem{proposition}[theorem]{Proposition}
\newtheorem{corollary}[theorem]{Corollary}
\newtheorem{remark}[theorem]{Remark}
\newtheorem{assumption}[theorem]{Assumption}
\newtheorem{definition}[theorem]{Definition}
\DeclareMathOperator{\sgn}{sgn}
\newcommand{\dd}{\mathrm{d}}
\newcommand{\LL}{\mathcal{L}}
\newcommand{\QQ}{\mathcal{Q}}
\newcommand{\RR}{\mathcal{R}}
\newcommand{\Uad}{U_{\textup{ad}}}
\newcommand{\N}{\mathbb{N}}
\newcommand{\R}{\mathbb{R}}
\newcommand{\weakly}{\rightharpoonup}
\title[Optimal Control of Non-Lipschitzian PDEs]{Optimal Control of Semilinear 
Elliptic Partial Differential Equations with Non-Lipschitzian Nonlinearities}
\author[C.\ Christof]{}
\subjclass{49J20, 49K20, 49K40}
\keywords{optimal control, nonsmooth optimization, optimality condition, KKT-system, non-Lipschitzian nonlinearity,
semilinear partial differential equation, porous media flow.}
 \email{constantin.christof@uni-due.de}
\thanks{$^*$Corresponding author: Constantin Christof}
\begin{document}
\maketitle

\centerline{\scshape Constantin Christof$^*$}
\medskip
{\footnotesize
\centerline{Faculty of Mathematics, University of Duisburg-Essen,}
\centerline{Thea-Leymann-Str.~9, 45127 Essen, Germany}
}
\medskip

\bigskip

\centerline{(Communicated by the associate editor name)}

\begin{abstract}
We study optimal control problems that are 
governed by semi\-linear elliptic partial differential equations 
that involve non-Lipschitzian non\-linearities.
It is shown that, for a certain class of such PDEs, the solution map is  
Fr\'{e}chet differentiable even though the differential operator contains a nondifferentiable
term.
We exploit this effect to establish first-order necessary optimality conditions 
for minimizers of the considered control problems. 
The resulting KKT-conditions take the form of coupled 
PDE-systems
that are posed in non-Muckenhoupt weighted Sobolev spaces 
and raise interesting questions regarding the 
regularity of optimal controls, the derivation of second-order optimality conditions, 
and the analysis of finite element discretizations.
\end{abstract}

\maketitle


\section{Introduction} 
\label{sec:1}

This paper is concerned with  optimal control problems 
of the following type:
\begin{equation}
\label{eq:P}
\tag{P}
\left.
\begin{aligned}
\text{Minimize } & J(y,u)
\\
\text{w.r.t.\ } &y \in H_0^1(\Omega),\quad u \in L^2(\Omega),
\\
\text{ s.t.\ } & 
-\Delta y 
+ \sgn(y)|y|^{\alpha} = u \text{ in } \Omega,
\quad y = 0 \text{ on } \partial \Omega,
\\
\text{ and } & u \in \Uad.
\end{aligned}
~~\right \}
\end{equation}
Here, $\Omega \subset \R^d$, $d \in \N$, is a nonempty
open bounded set with boundary  
$\partial \Omega$; the spaces $H_0^1(\Omega)$ and
$L^2(\Omega)$ are defined as usual (see \cite{Attouch2006});
\mbox{$J\colon H_0^1(\Omega) \times L^2(\Omega) \to \R$}
is a Fr\'{e}chet differentiable objective function;
$\Delta$ denotes the Laplace operator;
$\sgn\colon \R \to \R$ denotes the signum function;
 $\alpha \in (0,1)$ is a given number;  
 $\Uad$ is a nonempty convex  subset of $L^2(\Omega)$;
 and the governing partial differential equation (PDE) is understood in the weak sense.
 Note that the salient feature of \eqref{eq:P} is 
 that the state equation contains a Nemytskii operator 
 that is neither differentiable nor Lipschitz continuous. 
 The main purpose of this paper is to point out that 
 \eqref{eq:P} nevertheless possesses a Fr\'{e}chet differentiable 
 control-to-state map $S\colon L^2(\Omega) \to H_0^1(\Omega)$, $ u \mapsto y$,
 and, thus, allows for the derivation of
first-order necessary optimality conditions  in qualified form. 
 For the main results of our analysis, we refer the reader to \cref{th:Sgateaux},
\cref{cor:Frechet}, and \cref{th:KKT}. 

Before we begin with our study of the problem \eqref{eq:P},
let us give some background:
Semilinear PDEs involving 
non-Lipschitzian 
nonlinearities of the type $\sgn(y)|y|^\alpha$, $\alpha \in (0,1)$, 
arise, for instance, when modeling chemical reactions
in porous media and processes in
desalination plants; see \cite{Alt1986,Barret1997Part1,Diaz2012,Knabner2017} and the references therein.
In these fields, 
the 
exponent $\alpha$ is also known 
as the order
of the reaction process \cite{Diaz2012},
and
the semilinearity 
$\sgn(y)|y|^\alpha$ 
as the \emph{Freundlich isotherm} \cite{Barret1997Part1}.
A main feature of the nonlinearity $\sgn(y)|y|^\alpha$, $\alpha \in (0,1)$,  is that it promotes 
the formation of so-called \emph{dead zones} within $\Omega$ where the 
PDE-solution $y$ vanishes identically \cite{Barret1991,Diaz2012}. 
The price that one pays for this effect is  that
the term $\sgn(y)|y|^\alpha$ is neither Lipschitz continuous nor  differentiable
and, thus, induces a form of non\-smoothness that is
often hard to handle analytically.
The latter is in particular true 
when 
first-order necessary optimality conditions for 
optimal control problems like \eqref{eq:P} are considered
as these classically 
require at least some form 
of (directional) differentiability 
for the PDE-constraint \cite{Troeltzsch2010}. 
Because of these difficulties, 
optimal control and inverse problems 
governed by partial differential equations 
involving non-Lipschitzian nonlinearities 
are rarely addressed in the literature. 
One of the few contributions on this topic is 
\cite{Diaz2012} which studies the existence of 
optimal controls for minimization problems that are
governed by  PDEs similar to that in \eqref{eq:P}.
Compare also with \cite{Barbu1984} in this context 
where general regularization techniques are considered.

The main goal of the present paper is to point out 
that
problems of the type  \eqref{eq:P} are amenable to 
classical techniques from optimal control theory even in the presence of
the nonsmooth term $\sgn(y)|y|^\alpha$. 
  The key observation is that,
although nonsmooth and non-Lipschitzian,
the state equation of \eqref{eq:P} possesses 
a Fr\'{e}chet differentiable solution map $S\colon L^2(\Omega) \to H_0^1(\Omega)$, $u \mapsto y$;
see \cref{th:Sgateaux,cor:Frechet}.
This causes the reduced objective function
$L^2(\Omega) \ni u \mapsto J(S(u),u) \in \R$ of \eqref{eq:P}
to be Fr\'{e}chet differentiable as well, makes it possible to formulate 
a standard Bouligand stationarity condition for minimizers of 
\eqref{eq:P} (see \cref{prop:Bouligand}),
and allows for the derivation of a
KKT-type optimality system (see \cref{th:KKT}).
Note that these results also imply 
that PDEs of the form $-\Delta y 
+ \sgn(y)|y|^{\alpha} = u$ with $\alpha \in (0,1)$ are, in a certain sense, 
better behaved than semilinear partial differential equations that 
involve a Lipschitz continuous, directionally differentiable 
nonlinearity; cf.\ \cite{Betz2019,Betz2019-2,Christof2021,Christof2018nonsmoothPDE,Dong2024,Meyer2017}.
However, 
the 
non\-smooth\-ness of the PDE governing \eqref{eq:P} 
is not without consequences either.
As the analysis of \cref{sec:5} shows, 
it causes the 
obtained KKT-systems to involve 
Sobolev spaces with singular weights
that depend on the state,
may blow up arbitrarily fast, 
and typically do not satisfy a Muckenhoupt property. 
At least to the best of the author's knowledge, 
this effect and the resulting KKT-conditions have not been documented 
so far in the literature. 
Note that the appearance of the weighted Sobolev spaces 
also causes the derivation of further results, e.g., on finite element error estimates,
quadratic growth conditions, and regularity properties of optimal controls, to be 
a very interesting topic.

We would like to mention at this point that we consider \eqref{eq:P}
as a model problem in this paper. 
It is easy to extend our analysis, for example, to the case where the PDE in
\eqref{eq:P} involves an elliptic  second-order partial differential operator more complicated than $-\Delta$ (asymmetric, nonlinear, etc.); cf.\ the general framework in \cite[Assumption~1.2.1]{ChristofPhd2018}.
It should be noted further that the techniques that we use in 
\cref{sec:4} for analyzing the differentiability properties of 
the control-to-state map $S$ of \eqref{eq:P} are, in fact, part of a larger theoretical 
machinery for the analysis of elliptic variational inequalities (VIs) of the second kind
 that also allows for
generalizations in  various other directions; see \cref{rem:end_sec_4} and \cite{ChristofPhd2018,ChristofWachsmuth2020}. 
This can also be seen in \cref{sec:3}, where
 one of the first steps of our analysis is to reformulate the 
state equation of \eqref{eq:P} as an elliptic VI; see \cref{lem:VIreformulation}. Once this 
reformulation is obtained, our proof of the Fr\'{e}chet differentiability of the 
control-to-state map $S\colon L^2(\Omega) \to H_0^1(\Omega)$, $ u \mapsto y$, of \eqref{eq:P}
relies on the observation that---although non-smooth and non-Lipschitzian---the semilinearity 
in the PDE $-\Delta y + \sgn(y)|y|^{\alpha} = u$ cannot cause the difference quotients of $S$ to become unbounded in $H_0^1(\Omega)$ as it constitutes a monotone part of the 
differential operator and, as such, produces terms with the ``right'' sign in a-priori estimates; cf.\ the Lipschitz continuity result in \cref{lemma:uniquesolvability}. The resulting boundedness of the difference 
quotients of $S$  makes it possible to extract weakly convergent subsequences and, by 
means of a passage to the limit and a careful analysis of integrability properties,
to derive a variational problem that, on the one hand, 
has to be solved by all weak accumulation points of the difference quotients
and, on the other hand, can have at most one solution---thus establishing the weak 
directional differentiability of $S$.
The structure of the limit problem and compactness arguments 
then yield the G\^{a}teaux differentiability
of $S$ and, ultimately, the 
asserted Fr\'{e}chet differentiability property; see \cref{cor:Frechet}. 
We remark that, using the theory of second-order epi-derivatives, 
one can characterize precisely in which situations the above 
line of reasoning allows to prove the G\^{a}teaux differentiability
of the solution operator of an elliptic VI/an elliptic semilinear PDE;
see \cite[Corollary~1.4.4]{ChristofPhd2018} and \cref{rem:end_sec_4}. 
However, the conditions that arise in this context are
very abstract and hard to verify in practice.
The present paper shows that,
for semilinear elliptic PDEs of the form $-\Delta y + \sgn(y)|y|^{\alpha} = u$,
these abstract conditions are satisfied. 
Even more, 
we illustrate that, for this type of problem, 
the Fr\'{e}chet differentiability of the solution map
can be proved by means of rather elementary arguments
that do not require notions of generalized second-order differentiability 
and well visualize the underlying mechanisms and effects.

We conclude this introduction with an overview of the content and the structure 
of the remainder of the paper.

\Cref{sec:2,sec:3} are concerned with preliminaries. 
Here, we clarify the \mbox{notation} and recall basic results on the 
state equation of \eqref{eq:P}. 
In \cref{sec:4}, we establish the Fr\'{e}chet differentiability of 
the control-to-state operator $S\colon L^2(\Omega) \to H_0^1(\Omega)$,
$u \mapsto y$. The proof that we use in this section relies 
on
Lipschitz continuity properties of $S$,
 a reformulation of the PDE $-\Delta y 
+ \sgn(y)|y|^{\alpha} = u$ as an elliptic variational inequality of the second kind,
and 
techniques that have been developed in \cite{ChristofPhd2018,ChristofWachsmuth2020}.
\Cref{sec:5} is concerned with the derivation of 
first-order necessary 
optimality conditions for local minimizers of  
\eqref{eq:P}. Here, we also show that the obtained KKT-systems and 
Stampacchia truncation arguments allow 
to establish improved 
$L^q(\Omega)$-regularity properties for optimal controls 
of \eqref{eq:P}. The paper concludes with some remarks on 
open questions and topics for future research. 

\section{Basic notation}
\label{sec:2}

In what follows, we
use the symbols $\|\cdot\|$ and $(\cdot,\cdot)$
to denote 
norms and inner products on real vector spaces,
respectively, equipped with a subscript that clarifies the space under consideration.
For the Euclidean norm on $\R^d$, $d \in \N$,
we write $|\cdot|$.
The space of linear and continuous functions 
from a normed space $(X, \|\cdot\|_X)$
to a normed space $(Y, \|\cdot \|_Y)$
is denoted by $\LL(X,Y)$. In the special case $Y=\R$,
we write $X^* := \LL(X,\R)$ for the topological dual space of $X$
and $\langle \cdot, \cdot \rangle_X$ for a dual pairing, 
i.e., $\langle x^*, x \rangle_X :=  x^*(x)$ for all $x^* \in X^*$, $x \in X$. 
The modes of weak and strong convergence 
in a normed space $(X, \|\cdot\|_X)$
are denoted by $\weakly$ and $\to$, respectively. 
If $(X,\|\cdot\|_X)$ embeds continuously into $(Y, \|\cdot\|_Y)$,
then we write $X \hookrightarrow Y$. 
Recall that a function $F\colon X \to Y$ between 
normed spaces $(X, \|\cdot\|_X)$
and $(Y, \|\cdot \|_Y)$ is called G\^{a}teaux differentiable 
if the directional derivative 
\[
F'(x;h) :=
\lim_{0 < \tau \to 0} \frac{F(x + \tau h) - F(x)}{\tau}
\]
exists for all $x,h \in X$ and is linear and continuous in $h$. 
In this case, the operator $F'(x;\cdot) \in \LL(X,Y)$ is called the 
G\^{a}teaux derivative of $F$ at $x$ and denoted by $F'(x)$. If,
for every $x \in X$, there exists $F'(x) \in \LL(X,Y)$ such that 
\[
\lim_{0 < \|h\|_X \to 0} \frac{\|F(x+h) - F(x) - F'(x)h\|_Y}{\|h\|_X} = 0
\]
holds, 
then $F$ is called Fr\'{e}chet differentiable with derivative 
$F'\colon X\to \LL(X,Y)$. If the argument of $F$ has several components, 
then a partial Fr\'{e}chet derivative with respect to (w.r.t.)  a component $z$ 
is denoted by $\partial_z$.

Given a Lebesgue measurable  set $\Omega \subset \R^d$, $d \in \N$,
we denote by 
$L^q(\Omega)$, $1 \leq q \leq \infty$, the standard real Lebesgue spaces on $\Omega$,
equipped with their usual norms. 
For level sets and complements of level sets
of  elements $v$ of the Lebesgue 
spaces (defined up to sets of measure zero), we use the 
shorthand notation $\{v = c\}$ and $\{v \neq c\}$, $c \in \R$, respectively. 
The $\{0,1\}$-indicator function of a measurable set $D \subset \Omega$ is
denoted by $\mathds{1}_D \in L^\infty(\Omega)$.
If $\Omega$ is a nonempty, open, and bounded set, then we write 
$W^{k,q}(\Omega)$, 
$H^k(\Omega)$, $k \in \N$, $1\leq q \leq \infty$,
for the Sobolev spaces on $\Omega$, defined as in \cite[Chapter~5]{Attouch2006} and equipped 
with their usual norms.
With $H_0^1(\Omega)$, we denote the closure of the 
set $C_c^\infty(\Omega)$ of smooth functions with compact support on $\Omega$
w.r.t.\ the 
$H^1(\Omega)$-norm. 
Recall that the space $H_0^1(\Omega)$ is Hilbert when endowed with the inner product
\[
\left ( v, w\right )_{H_0^1(\Omega)}
:=
\left \langle -\Delta v, w\right \rangle_{H_0^1(\Omega)}
=
\int_\Omega \nabla v \cdot \nabla w\,\dd x \qquad \forall v,w \in H_0^1(\Omega).
\]
Here, $\Delta$ and $\nabla$ are the distributional Laplacian and the weak gradient, respectively. 
As usual,  we denote by $H^{-1}(\Omega)$ the dual space of 
$H_0^1(\Omega)$ with pivot space $L^2(\Omega)$, i.e., 
we use the identifications 
$H_0^1(\Omega) \hookrightarrow L^2(\Omega) \cong L^2(\Omega)^* \hookrightarrow H^{-1}(\Omega)$.
Recall that the Sobolev embeddings imply that we have
\begin{equation}
\label{eq:sob_emb_primal}
H_0^1(\Omega)
\hookrightarrow 
L^q(\Omega)
\qquad 
\forall 
q\in 
\QQ_d
:=
\begin{cases}
[1,\infty] & \text{ if } d=1,
\\
[1,\infty) & \text{ if } d=2,
\\
\displaystyle
\left [
1, \frac{2d}{d-2}  \right ]
& \text{ if } d > 2,
\end{cases}
\end{equation}
and, by duality, 
\begin{equation}
\label{eq:sob_emb_dual}
L^q(\Omega)
\hookrightarrow 
H^{-1}(\Omega)
\qquad 
\forall 
q\in 
\QQ_d^*
:=
\begin{cases}
[1,\infty] & \text{ if } d=1,
\\
(1,\infty] & \text{ if } d=2,
\\
\displaystyle
\left [\frac{2d}{d+2}, \infty  \right ]
& \text{ if } d > 2;
\end{cases}
\end{equation}
see \cite[Theorem 5.7.2]{Attouch2006}.
Recall further that the embeddings in \eqref{eq:sob_emb_primal}
are compact except for the limit case  $q= 2d/(d-2)$, $d>2$;
see \cite[Theorem 5.7.7]{Attouch2006}. 
(Note that no regularity of the boundary $\partial \Omega$ is needed here 
since the space $H_0^1(\Omega)$ is considered; cf.\ \cite[Theorem 7.22]{Gilbarg2001elliptic}
and the comments thereafter.)
By Schauder's theorem, this also implies that the 
embeddings in \eqref{eq:sob_emb_dual} are compact, 
except for the case $q=2d/(d+2)$, $d>2$.
We remark that additional symbols etc.\
are introduced in the remainder of this paper wherever necessary. 
This notation is clarified on its first appearance.

\section{Well-posedness and Lipschitz stability of the governing PDE}
\label{sec:3}
We begin our study of the optimal control problem \eqref{eq:P} by 
collecting preliminary results on the well-posedness and Lipschitz stability 
of the state equation
\begin{equation}
\tag{D}
\label{eq:govPDE}
-\Delta y  
+  \sgn(y)|y|^{\alpha} = u \text{ in } \Omega,
\quad y = 0 \text{ on } \partial \Omega.
\end{equation}
For the sake of clarity, we restate 
our assumptions on the quantities in \eqref{eq:govPDE} in:

\begin{assumption}[standing assumptions on the governing PDE]~
\begin{enumerate}[label=\roman*)]
\item $\Omega \subset \R^d$, $d \in \N$, is a nonempty
open bounded set;
\item $\alpha \in (0,1)$ is a given exponent;
\item $u \in H^{-1}(\Omega)$ is a given right-hand side.
\end{enumerate}
\end{assumption}

As usual, we understand 
solutions of \eqref{eq:govPDE} in the weak sense. 

\begin{definition}[weak solution]\label{def:weak_solution}%
Given $u \in H^{-1}(\Omega)$,
we call $y$ a weak solution of \eqref{eq:govPDE} 
with right-hand side $u$ if 
$y \in H_0^1(\Omega)$ holds and 
\begin{equation}
\label{eq:weak_form_D}
\int_\Omega \nabla y \cdot \nabla v 
+
\sgn(y)|y|^{\alpha}
v\,
\dd x
=
\langle u,v \rangle_{H_0^1(\Omega)}
\quad 
\forall v \in H_0^1(\Omega).
\end{equation}
\end{definition}

The existence of a unique weak solution of \eqref{eq:govPDE}
for all $u \in H^{-1}(\Omega)$
is a straightforward consequence of 
standard results from monotone operator theory. 

\begin{lemma}[unique solvability of (D)]
\label{lemma:uniquesolvability}
The PDE \eqref{eq:govPDE} possesses a unique weak solution $y \in H_0^1(\Omega)$
for all $u \in H^{-1}(\Omega)$. If, further, $y_1, y_2 \in H_0^1(\Omega)$
solve \eqref{eq:govPDE} with right-hand sides 
$u_1, u_2 \in H^{-1}(\Omega)$, respectively, then the following 
stability estimate holds true:
\begin{equation}
\label{eq:SLipschitz}
\|y_1 - y_2\|_{H_0^1(\Omega)}\leq \|u_1 - u_2\|_{H^{-1}(\Omega)}.
\end{equation}
\end{lemma}

\begin{proof}
This follows from 
\cite[Theorem 4.1]{Troeltzsch2010} and trivial calculations. 
\end{proof}

Due to 
\cref{lemma:uniquesolvability}, it makes sense to introduce the following definition.

\begin{definition}[solution operator]
We denote by $S\colon H^{-1}(\Omega) \to H_0^1(\Omega)$, $u \mapsto y$,
the solution operator of the partial differential equation \eqref{eq:govPDE}.
\end{definition}

Note that, due to \eqref{eq:SLipschitz}, $S$ is globally one-Lipschitz, i.e., 
\begin{equation}
\label{eq:SLipschitz-SS}
\|S(u_1) - S(u_2)\|_{H_0^1(\Omega)}\leq \|u_1 - u_2\|_{H^{-1}(\Omega)}
\qquad
\forall u_1,u_2 \in H^{-1}(\Omega).
\end{equation}
We remark that, in the remainder of this paper,
the operator $S$ is sometimes also considered on domains of definition that are smaller than 
$H^{-1}(\Omega)$ (e.g., $L^2(\Omega)$ in the context of the problem \eqref{eq:P}). 
We denote these restrictions of $S$ with the same symbol 
for the sake of simplicity. The next lemma establishes 
a connection between the PDE 
\eqref{eq:govPDE} and a certain elliptic variational 
inequality of the second kind.

\begin{lemma}[equivalence to an elliptic VI]
\label{lem:VIreformulation}
Let $u \in H^{-1}(\Omega)$ be given. 
A function $y \in H_0^1(\Omega)$ is the weak solution of \eqref{eq:govPDE}
with right-hand side $u$ 
if and only if it solves the elliptic variational inequality of the second kind
\begin{equation}
\label{eq:VI}
\tag{V}
\left.
\begin{aligned}
y \in H_0^1(\Omega),\hspace{0.5cm}&
\\
( y , v - y)_{H_0^1(\Omega)}
&+
\int_\Omega
\frac{|v|^{\alpha+1}}{\alpha+1}
\dd x
- 
\int_\Omega
\frac{|y|^{\alpha+1}}{\alpha+1}
\dd x
\geq
\langle u,v -y\rangle_{H_0^1(\Omega)}
\\
&\hspace{5.4cm}
\forall v \in H_0^1(\Omega).
\end{aligned}
~~
\right\}
\end{equation}
\end{lemma}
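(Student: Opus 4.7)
The plan is to interpret the VI \eqref{eq:VI} as the first-order optimality condition of the strictly convex minimization problem
\[
\min_{y \in H_0^1(\Omega)} \frac{1}{2}\|y\|_{H_0^1(\Omega)}^2 + \Phi(y) - \langle u,y\rangle_{H_0^1(\Omega)}, \qquad \Phi(y) := \int_\Omega \frac{|y|^{\alpha+1}}{\alpha+1}\,\dd x,
\]
and to observe that \eqref{eq:govPDE} is the corresponding Euler--Lagrange equation. Because $t \mapsto |t|^{\alpha+1}/(\alpha+1)$ is convex on $\R$ (since $\alpha+1 > 1$) with classical derivative $\sgn(t)|t|^\alpha$ at every $t \in \R$ (including $t=0$, where both the function and its derivative vanish), the pointwise subgradient identity
\[
\frac{|b|^{\alpha+1}}{\alpha+1} - \frac{|a|^{\alpha+1}}{\alpha+1} \geq \sgn(a)|a|^\alpha (b-a) \qquad \forall a,b \in \R
\]
will be the workhorse for one direction, while G\^{a}teaux differentiability of $\Phi$ on $H_0^1(\Omega)$ will yield the other.

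For the implication ``\eqref{eq:govPDE} $\Rightarrow$ \eqref{eq:VI}'', I would fix an arbitrary $v \in H_0^1(\Omega)$, apply the pointwise subgradient inequality with $a = y(x)$, $b = v(x)$, and integrate over $\Omega$. This yields
\[
\int_\Omega \frac{|v|^{\alpha+1}}{\alpha+1}\,\dd x - \int_\Omega \frac{|y|^{\alpha+1}}{\alpha+1}\,\dd x \geq \int_\Omega \sgn(y)|y|^\alpha(v-y)\,\dd x.
\]
Substituting the last integral via the weak formulation \eqref{eq:weak_form_D} (tested with $v - y$) immediately produces \eqref{eq:VI}.

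For the converse ``\eqref{eq:VI} $\Rightarrow$ \eqref{eq:govPDE}'', I would insert the test function $v = y + t w$ with $w \in H_0^1(\Omega)$ and $t > 0$ into \eqref{eq:VI}, divide by $t$, and pass to the limit $t \downarrow 0$. The only nontrivial term is
\[
\frac{1}{t}\int_\Omega \frac{|y+tw|^{\alpha+1} - |y|^{\alpha+1}}{\alpha+1}\,\dd x,
\]
whose pointwise integrand converges a.e.\ to $\sgn(y)|y|^\alpha w$. Applying the same argument to $-w$ in place of $w$ yields the reversed inequality and therefore equality, which is precisely \eqref{eq:weak_form_D}.

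The main technical obstacle is to justify this passage to the limit by dominated convergence, which requires a $t$-independent integrable majorant for the difference quotient. The mean value theorem applied to $t \mapsto |y+tw|^{\alpha+1}$ provides a bound of the form $C(|y|^\alpha + |w|^\alpha)|w|$, and the Sobolev embedding $H_0^1(\Omega) \hookrightarrow L^{\alpha+1}(\Omega)$ together with H\"{o}lder's inequality with exponents $(\alpha+1)/\alpha$ and $\alpha+1$ shows that such a bound is integrable on $\Omega$. (Note that $\alpha+1 \in (1,2)$ lies safely within every embedding range $\QQ_d$ from \eqref{eq:sob_emb_primal}.) Once this is in place, the argument is complete.
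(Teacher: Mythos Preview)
Your argument is correct. The forward implication ``\eqref{eq:govPDE} $\Rightarrow$ \eqref{eq:VI}'' is handled exactly as in the paper, via the pointwise subgradient inequality for the convex map $t \mapsto |t|^{\alpha+1}/(\alpha+1)$ combined with the weak formulation \eqref{eq:weak_form_D}.

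For the converse, however, you take a genuinely different route. The paper does \emph{not} differentiate the variational inequality; instead it argues indirectly: by \cref{lemma:uniquesolvability} the PDE \eqref{eq:govPDE} has a unique weak solution, which (by the forward implication) solves \eqref{eq:VI}; since \eqref{eq:VI} can have at most one solution (the standard monotonicity argument, cf.\ \cref{cor:SLipschitz}), any solution of \eqref{eq:VI} must coincide with the weak solution of \eqref{eq:govPDE}. Your approach is more self-contained---it does not invoke the prior existence result for \eqref{eq:govPDE} or the uniqueness of \eqref{eq:VI}---and it explicitly identifies $\Phi$ as G\^{a}teaux differentiable on $H_0^1(\Omega)$, which is a useful observation in its own right. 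The paper's approach, on the other hand, is shorter and avoids the dominated-convergence bookkeeping. Both are perfectly valid; your majorant $C(|y|^\alpha+|w|^\alpha)|w|$ and the H\"older argument with exponents $(\alpha+1)/\alpha$ and $\alpha+1$ are exactly what is needed.
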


\begin{proof}
If $y \in H_0^1(\Omega)$ is the weak solution of \eqref{eq:govPDE},
then our definition of the $H_0^1(\Omega)$-inner product, 
\eqref{eq:weak_form_D}, and
 the properties of convex subgradients imply 
\begin{equation*}
\begin{aligned}
- ( y , v - y)_{H_0^1(\Omega)} + \langle u,v -y\rangle_{H_0^1(\Omega)}
& =
\int_\Omega
\sgn(y)|y|^{\alpha}
(v -y)
\dd x
\\
&\leq
\int_\Omega
\frac{|v|^{\alpha+1}}{\alpha+1}
\dd x
-
\int_\Omega
\frac{|y|^{\alpha+1}}{\alpha+1}
\dd x
\quad \forall v \in H_0^1(\Omega).
\end{aligned}
\end{equation*}
Thus, $y$ is a solution of \eqref{eq:VI} as claimed. 
Note that this also shows that  \eqref{eq:VI} possesses at least one 
solution for all $u \in H^{-1}(\Omega)$;
see \cref{lemma:uniquesolvability}. 
As \eqref{eq:VI}  can have at most one solution
(as one may easily check by means of a trivial contradiction argument;
see \cite[Theorem 4.1]{Glowinski2013}),
it follows that every solution of \eqref{eq:VI} 
coincides with the unique weak solution of \eqref{eq:govPDE}.
This completes the proof. 
\end{proof} 

The main advantage of  the variational inequality
\eqref{eq:VI}  is that it does not involve 
the signum function or potentially singular fractions 
(as appearing in \eqref{eq:govPDE} when using the 
reformulation $\sgn(y)|y|^\alpha = y / |y|^{1-\alpha}$). 
We remark that, using the binomial identities, one can 
easily check that \eqref{eq:VI} can also be recast as an optimization problem 
with a strongly convex objective function. We work with the 
formulation \eqref{eq:VI} in our analysis
because this approach is also feasible when starting with a 
PDE that involves an asymmetric second-order differential operator;
cf.\ the comments on possible extensions of our analysis 
in \cref{sec:1} and  \cref{rem:end_sec_4}.

\section{Differentiability of the PDE solution map}
\label{sec:4}
With the preliminaries in place, 
we can turn our attention to the analysis of the 
differentiability properties of the
solution map $S\colon u \mapsto y$ of
\eqref{eq:govPDE}. To prove that this map 
possesses classical derivatives 
even though \eqref{eq:govPDE} contains terms  that are merely 
H\"{o}lder continuous, 
we will exploit the characterization of $S$ by means 
of  \eqref{eq:VI}
and techniques that have been developed for the 
analysis of elliptic VIs
of the first and the second kind in \cite{ChristofPhd2018,ChristofWachsmuth2020}. 
We begin with a technical lemma that is a 
special version of \cite[Lemma 4.2.2]{ChristofPhd2018}.

\begin{lemma}[Taylor-like expansion for powers of absolute value functions]
\label{lemma:technical_1}
Let $\beta \in (0,1)$. Then, for all $x \in \R \setminus \{0\}$,
 $t> 0$, and  $z \in \R$, we have
\begin{equation}
\label{eq:tech1}
\begin{aligned}
&\frac{1}{t} \left ( \frac{|x + t z |^{\beta+1}  - |x|^{\beta+1} }{t} - (\beta +1) |x|^{\beta - 1} 
x z  \right ) \\
&\quad =  (\beta+1)| x |^{\beta} \frac{1}{t}\left ( \frac{   | x + t z |  - | x |   }{t} - |x| ^{-1} x z  \right ) \\
&\quad\qquad  +\frac{1}{t^2}  ( | x + t z | - |  x |   )^2 
(\beta^2 + \beta)\int_{0}^{1} ( 1 -s)  \Big ( (1-s) | x |  + s | x + t z |  \Big )^{\beta - 1} \dd s.
\end{aligned}
\end{equation}
Further, it holds
\begin{equation}
\label{eq:tech2}
 \frac{1}{t} \left ( \frac{|x + t z| - |x|}{t} -   |x|^{- 1} x z \right ) 
 = \frac{ |x|(|x + t z|   - |x|)  - t   x z  }{|x| (|x + t z| + |x|)^2 } z^2
\end{equation}
and
\begin{equation}
\label{eq:tech3}
\begin{aligned}
& \frac{1}{t^2} (|x + t z|- |x| )^2 
=  \frac{ 4 x^2   }{(|x| + |x + t z|)^2} z^2 +  \frac{ 4t x z    +   t^2 z^2 }{(|x| + |x + t z|)^2}z^2.
\end{aligned}
\end{equation}
\end{lemma}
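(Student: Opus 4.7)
The plan is to treat all three displays as purely algebraic identities on $\R$ which, once the notational clutter is absorbed, reduce to short computations driven by one factorization. I would set $a := |x+tz|$ and $b := |x|$ (so $b > 0$ and $b^2 = x^2$, since $x \neq 0$) and repeatedly use
\begin{equation*}
(a-b)(a+b) = a^2 - b^2 = (x+tz)^2 - x^2 = tz(2x + tz).
\end{equation*}

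For \eqref{eq:tech3}, I would use the above to write $a - b = tz(2x+tz)/(a+b)$, square, and divide by $t^2$; expanding $(2x+tz)^2 = 4x^2 + 4txz + t^2 z^2$ then reads off the two summands on the right-hand side. For \eqref{eq:tech2}, I would place $\frac{a-b}{t} - \frac{xz}{b}$ on the common denominator $b(a+b)$, substitute the above factorization for $a-b$ once, and use $b^2 = x^2$ to simplify the resulting numerator to $tz^2\bigl(b(a-b) - txz\bigr)/(a+b)$; a final division by $t$ then yields the right-hand side of \eqref{eq:tech2}. Neither of these two steps requires anything beyond careful bookkeeping.

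The identity \eqref{eq:tech1} carries the real content of the lemma, and the key observation is that it is merely a rearrangement of Taylor's formula with integral remainder for $g\colon [0,\infty) \to \R$, $g(r) := r^{\beta+1}$, between the points $b$ and $a$. Applying
\begin{equation*}
g(a) = g(b) + g'(b)(a - b) + (a-b)^2 \int_0^1 (1-s)\, g''\bigl((1-s)b + sa\bigr) \dd s
\end{equation*}
with $g'(r) = (\beta + 1) r^{\beta}$ and $g''(r) = (\beta^2 + \beta) r^{\beta - 1}$, dividing through by $t^2$, and subtracting $(\beta+1)|x|^{\beta-1} xz / t$ from both sides would produce \eqref{eq:tech1} once one uses the elementary identity $|x|^\beta \cdot xz/(t|x|) = |x|^{\beta-1} xz/t$ to merge the two first-order terms into the prefactor $(\beta+1)|x|^\beta$ multiplying $\tfrac{1}{t}\bigl[\tfrac{a-b}{t} - \tfrac{xz}{b}\bigr]$. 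The main obstacle, and the only step that deserves genuine care, is that $g$ is only $C^1$ at the origin (since $\beta-1 \in (-1,0)$), so in the degenerate case $a = 0$ the classical $C^2$-version of Taylor's theorem does not apply on the full interval $[0,b]$. However, since $b > 0$ and $\beta > 0$, the integrand $(1-s)((1-s)b + sa)^{\beta-1}$ is continuous on $[0,1]$ in all cases---it reduces to $(1-s)^\beta b^{\beta-1}$ when $a = 0$---so the integral remainder formula remains valid. I would justify this either by checking directly that both sides, viewed as functions of $a \in [0,\infty)$, have the same derivative and agree at $a = b$, or by approximating $g$ with $g_\varepsilon(r) := (r+\varepsilon)^{\beta+1}$ and passing to the limit $\varepsilon \to 0^+$ via dominated convergence.
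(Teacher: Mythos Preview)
Your proposal is correct and follows essentially the same approach as the paper: Taylor's formula with integral remainder for $r\mapsto r^{\beta+1}$ yields \eqref{eq:tech1}, and the difference-of-squares factorization $(a-b)(a+b)=tz(2x+tz)$ drives \eqref{eq:tech2} and \eqref{eq:tech3}. The only notable deviation is in \eqref{eq:tech2}: the paper argues by distinguishing the cases $\sgn(x+tz)=\sgn(x)$ and $\sgn(x+tz)\neq\sgn(x)$, whereas your algebraic route via $b^2=x^2$ and $a^2=(x+tz)^2$ avoids the case split entirely --- a minor but genuine streamlining; your handling of the endpoint $a=0$ in Taylor's formula is also more explicit than the paper's one-line appeal to $W^{2,q}$-regularity.
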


\begin{proof}
As
the function $(0,c) \ni s \mapsto s^{\beta +1} \in \R$ 
is in $W^{2,q}(0,c)$ for all $1 \leq q < (1-\beta)^{-1}$ and all $c>0$,
Taylor's theorem implies 
\begin{align*}
b^{\beta+1}  - a^{\beta+1}
=
(\beta+1) a^{\beta}(b - a)
+
(b-a)^2 (\beta^2 + \beta)
\int_0^1 (1-s )( (1-s)a + sb )^{\beta - 1} \dd s 
\end{align*}
for all $a >0$ and $b \geq 0$.
If we choose $a := |x| > 0$ and $b := |x + t z | \geq 0$ in the above and plug into 
the left-hand side of \eqref{eq:tech1}, then \eqref{eq:tech1}
follows immediately. The equalities in \eqref{eq:tech2} and \eqref{eq:tech3}
are obtained by straightforward calculation. Indeed, 
by distinguishing between the cases 
$\sgn(x + t z) = \sgn(x)$ and $\sgn(x + t z) \neq \sgn(x)$,
one easily checks that 
\[
 \frac{1}{t} \left ( \frac{|x + t z| - |x|}{t} -   |x|^{- 1} x z \right ) 
 = 
\left ( |x + t z|   - |x|  - t  |x|^{-1}  x z \right)
 \frac{z^2}{(|x + t z| + |x|)^2}
\]
holds for all $x \in \R \setminus \{0\}$,
 $t> 0$, and  $z \in \R$. This proves \eqref{eq:tech2}.
Using the third binomial identity, we may further compute that 
\begin{align*}
 \frac{1}{t^2} (|x + t z|- |x| )^2 
&=   \frac{(|x + t z|^2- |x|^2 )^2}{t^2}  \frac{1}{(|x + t z|+ |x| )^2}
\\
& =   \frac{(2 t x z + t^2 z^2 )^2}{t^2}  \frac{1}{(|x + t z|+ |x| )^2}
\\
& =   \frac{4 x^2 z^2 + 4 t x z^3 + t^2  z^4}{(|x + t z|+ |x| )^2}
\\
& =
\frac{ 4 x^2   }{(|x| + |x + t z|)^2} z^2 +  \frac{ 4t x z    +   t^2 z^2 }{(|x| + |x + t z|)^2}z^2.
\end{align*}
This establishes \eqref{eq:tech3}
and completes the proof of the lemma.
\end{proof}

To prepare our sensitivity analysis, we next 
introduce some notation.

\begin{definition}[difference quotient]
Given $u, h \in H^{-1}(\Omega)$ and $\tau > 0$,
we denote by $\delta_\tau \in H_0^1(\Omega)$ the difference quotient 
\begin{equation}
\label{eq:diffquot_def}
\delta_\tau := \frac{S(u + \tau h) - S(u)}{\tau}. 
\end{equation}
\end{definition}

By exploiting \cref{lem:VIreformulation},
it is easy to establish that the difference quotients 
of $S$ are also characterized by an elliptic variational inequality of the 
second kind.
 
\begin{lemma}[VI for the difference quotients]
For every $u\in H^{-1}(\Omega)$ with state $y := S(u)$, every 
$h \in H^{-1}(\Omega)$,  and every $\tau > 0$, the difference quotient
$\delta_\tau$ is the (necessarily unique) solution of the 
variational inequality 
\begin{equation}
\label{eq:diffquotVI}
\begin{aligned}
&\delta_\tau \in H_0^1(\Omega), 
\\
&( \delta_\tau , z - \delta_\tau)_{H_0^1(\Omega)}
\\
&\quad+
\frac{1}{(\alpha+1)}
\int_\Omega
\frac{1}{\tau}
\left (
\frac{| y + \tau z|^{\alpha +1} - | y|^{\alpha+1}}{\tau}
-
(\alpha+1)
\sgn(y)|y|^{\alpha}  z
\right )
\dd x
\\
&\quad-
\frac{1}{(\alpha+1)}
\int_\Omega
\frac{1}{\tau}
\left (
\frac{| y + \tau \delta_\tau|^{\alpha +1} - | y|^{\alpha+1}}{\tau}
-
(\alpha+1)
\sgn(y)|y|^{\alpha}  \delta_\tau
\right )
\dd x
\\
&\hspace{5.5cm}
\geq 
\langle h,z -\delta_\tau\rangle_{H_0^1(\Omega)}
\quad 
\forall z \in H_0^1(\Omega).
\end{aligned}
\end{equation}
\end{lemma}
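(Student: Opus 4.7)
The plan is to derive \eqref{eq:diffquotVI} by applying the VI characterization of \cref{lem:VIreformulation} to both $y = S(u)$ and $S(u + \tau h) = y + \tau \delta_\tau$, and then rewriting the VI for the shifted state as a perturbation of the VI centered at $y$. First, I would take \eqref{eq:VI} for $S(u + \tau h)$ and substitute the shifted test function $w := y + \tau z$ for arbitrary $z \in H_0^1(\Omega)$. Since $w - (y + \tau \delta_\tau) = \tau (z - \delta_\tau)$, this step yields
\[
\tau\,(y + \tau\delta_\tau, z - \delta_\tau)_{H_0^1(\Omega)} + \frac{1}{\alpha+1}\int_\Omega \big(|y+\tau z|^{\alpha+1} - |y + \tau\delta_\tau|^{\alpha+1}\big)\,\dd x \geq \tau\,\langle u + \tau h, z - \delta_\tau\rangle_{H_0^1(\Omega)}.
\]

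Next I would expand $(y+\tau\delta_\tau, z - \delta_\tau)_{H_0^1(\Omega)} = (y, z - \delta_\tau)_{H_0^1(\Omega)} + \tau\,(\delta_\tau, z - \delta_\tau)_{H_0^1(\Omega)}$ and eliminate the first summand by appealing to the weak form \eqref{eq:weak_form_D} of the PDE for $y$ (tested with $z - \delta_\tau$), which gives the identity $(y, z - \delta_\tau)_{H_0^1(\Omega)} = \langle u, z - \delta_\tau\rangle_{H_0^1(\Omega)} - \int_\Omega \sgn(y)|y|^\alpha (z - \delta_\tau)\,\dd x$. Inserting this identity cancels the $\tau\,\langle u, z-\delta_\tau\rangle$ contribution against the right-hand side and introduces an explicit $-\tau \int_\Omega \sgn(y)|y|^\alpha (z - \delta_\tau)\,\dd x$ term on the left. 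Dividing the resulting inequality by $\tau^2$ then yields
\[
(\delta_\tau, z - \delta_\tau)_{H_0^1(\Omega)} + \frac{1}{(\alpha+1)\tau^2}\int_\Omega \big(|y+\tau z|^{\alpha+1} - |y + \tau\delta_\tau|^{\alpha+1}\big)\,\dd x - \frac{1}{\tau}\int_\Omega \sgn(y)|y|^\alpha (z - \delta_\tau)\,\dd x \geq \langle h, z - \delta_\tau\rangle_{H_0^1(\Omega)}.
\]
Splitting $|y+\tau z|^{\alpha+1} - |y+\tau\delta_\tau|^{\alpha+1} = (|y+\tau z|^{\alpha+1} - |y|^{\alpha+1}) - (|y+\tau\delta_\tau|^{\alpha+1} - |y|^{\alpha+1})$ and distributing the $\sgn(y)|y|^\alpha$ correction to each piece produces exactly the Taylor-remainder form stated in \eqref{eq:diffquotVI}. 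This bookkeeping is the only genuine calculation, and it is entirely straightforward.

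Finally, the parenthetically claimed uniqueness follows from the fact that, for fixed $y$ and $\tau > 0$, the integrand $|y + \tau z|^{\alpha+1}$ is convex in $z$, so the functional sitting on the left-hand side of \eqref{eq:diffquotVI} equals a strongly coercive quadratic part $\tfrac{1}{2}\|z\|_{H_0^1(\Omega)}^2$ plus a convex and linear perturbation; \eqref{eq:diffquotVI} is then a standard elliptic VI of the second kind in $H_0^1(\Omega)$ whose solution is unique by the same subtract-and-test argument already used in \cref{cor:SLipschitz}. I do not anticipate any serious obstacle here; the whole statement is really a rewriting of the shifted VI at scale $\tau$ into a form in which the formal linearization $\sgn(y)|y|^\alpha$ has been explicitly subtracted, which is the form required to pass to the limit $\tau \to 0^+$ in \cref{sec:4}.
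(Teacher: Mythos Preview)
Your proof is correct and follows essentially the same approach as the paper: take the VI \eqref{eq:VI} for $S(u+\tau h)=y+\tau\delta_\tau$ with test functions $v=y+\tau z$, use the weak form \eqref{eq:weak_form_D} for $y$ to eliminate the $(y,z-\delta_\tau)_{H_0^1(\Omega)}$ and $\langle u,z-\delta_\tau\rangle$ terms, divide by $\tau^2$, and then add and subtract $|y|^{\alpha+1}$ inside the integral. The uniqueness argument via the standard elliptic VI framework is likewise what the paper invokes.
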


\begin{proof}
To show that $\delta_\tau$ solves \eqref{eq:diffquotVI},
one chooses test functions 
of the form $v = y + \tau z$ in the VI \eqref{eq:VI}
satisfied by $S(u + \tau h)$,
plugs in the identity 
$S(u + \tau h) = y + \tau \delta_\tau$ obtained from 
\eqref{eq:diffquot_def},
exploits the variational identity \eqref{eq:weak_form_D} satisfied by $y$,
divides the resulting inequality by $\tau^2$,
and artificially introduces the 
$|y|^{\alpha+1}$-terms 
(which add up to zero in \eqref{eq:diffquotVI}). 
That \eqref{eq:diffquotVI} can have at most one solution 
again follows from a trivial contradiction argument;
see \cite[Theorem 4.1]{Glowinski2013}.
\end{proof}

Note that the integrands in \eqref{eq:diffquotVI}
have the same form as the left-hand side 
of \eqref{eq:tech1}. From \eqref{eq:SLipschitz-SS},
we further obtain that the
family of difference quotients $\{\delta_\tau\}$
associated with a right-hand side $u \in H^{-1}(\Omega)$ and 
a perturbation $h \in H^{-1}(\Omega)$ is bounded in 
$H_0^1(\Omega)$ and, thus, possesses 
weakly convergent subsequences for $\tau \to 0$.
In what follows, the main idea is to pass to the limit $\tau \to 0$ 
in \eqref{eq:diffquotVI} along 
weakly convergent subsequences of difference quotients
to arrive at a limit variational inequality and to exploit 
this limit VI to show that the 
difference quotients $\delta_\tau$ can have only one 
weak accumulation point in $H_0^1(\Omega)$ for $\tau \to 0$. 
By means of a trivial contradiction argument, it then follows that 
the whole family of difference quotients $\delta_\tau$ 
converges weakly in $H_0^1(\Omega)$ to 
a unique $\delta \in H_0^1(\Omega)$ for $\tau \to 0$,
and by means of bootstrapping and compactness arguments,
that the solution operator $S$ of \eqref{eq:govPDE} is Fr\'{e}chet differentiable 
as a function $S\colon L^2(\Omega) \to H_0^1(\Omega)$.
The most delicate point when arguing along these lines is, 
of course, the limit transition with the 
nonsmooth terms in \eqref{eq:diffquotVI}. 
We proceed in several steps, starting with the following lemma.
 
\begin{lemma}[behavior of the $\boldsymbol{\alpha}$-$\boldsymbol{\delta_\tau}$-terms]
\label{lem:lim_transition_1}
Let $u, h \in H^{-1}(\Omega)$ be given. Define $y := S(u)$. 
Suppose that $\delta_k := \delta_{\tau_k}$
are  difference quotients as in \eqref{eq:diffquot_def} 
associated with a sequence  $\{\tau_k\} \subset (0,\infty)$.
Assume that 
$\tau_k \to 0$ holds and that $\delta_k \weakly \delta$ in $H_0^1(\Omega)$ for some 
$\delta \in H_0^1(\Omega)$. Then it holds 
\begin{equation}
\label{eq:delta_vanish}
 \delta = 0 \text{ a.e.\ in } \{y = 0\}
\end{equation}
and
\begin{equation}
\label{eq:delta_liminf}
\begin{aligned}
\infty &> \liminf_{k \to \infty}
\left [
\frac{1}{(\alpha+1)}
\int_\Omega
\frac{1}{\tau_k}
\left (
\frac{| y + \tau_k \delta_k|^{\alpha +1} - | y|^{\alpha+1}}{\tau_k}
-
(\alpha+1)
\sgn(y)|y|^{\alpha}  \delta_k
\right )
\dd x
\right ]
\\
&\geq 
\frac{\alpha}{2}
\int_{\{y \neq 0\}}
\frac{\delta^2}{|y|^{1 - \alpha}} \dd x.
\end{aligned}
\end{equation}
\end{lemma}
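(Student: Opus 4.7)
The plan is to combine the a priori bound coming from testing the variational inequality \eqref{eq:diffquotVI} with $z=0$, the Taylor-like expansion of \cref{lemma:technical_1}, and Fatou's lemma. First I would record two structural observations: (i) the convexity of $s\mapsto |s|^{\alpha+1}$ makes each integrand
\begin{equation*}
I_k := \frac{1}{\tau_k}\!\left(\frac{|y + \tau_k \delta_k|^{\alpha+1} - |y|^{\alpha+1}}{\tau_k} - (\alpha+1)\sgn(y)|y|^{\alpha} \delta_k\right)
\end{equation*}
pointwise nonnegative; (ii) choosing $z=0$ in \eqref{eq:diffquotVI} makes the middle integral vanish identically, which after rearrangement yields
\begin{equation*}
\frac{1}{\alpha+1}\int_\Omega I_k\,\dd x\;\leq\;\langle h,\delta_k\rangle_{H_0^1(\Omega)} - \|\delta_k\|_{H_0^1(\Omega)}^2\;\leq\;\tfrac{1}{4}\|h\|_{H^{-1}(\Omega)}^2,
\end{equation*}
the last estimate following from Young's inequality (alternatively from \cref{cor:SLipschitz}). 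This immediately gives $\liminf_{k\to\infty}\int_\Omega I_k\,\dd x < \infty$.

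To establish \eqref{eq:delta_vanish}, I would restrict the above bound to the set $\{y=0\}$. There $\sgn(y)=0$ and $|y|^\alpha=0$, so $I_k$ collapses to $\tau_k^{\alpha-1}|\delta_k|^{\alpha+1}$, and multiplication by $\tau_k^{1-\alpha}$ gives
\begin{equation*}
\int_{\{y=0\}}|\delta_k|^{\alpha+1}\,\dd x\;\leq\;C\,\tau_k^{1-\alpha}\;\to\; 0.
\end{equation*}
Because $\alpha+1\in(1,2)$ lies strictly inside $\QQ_d$, the embedding $H_0^1(\Omega)\hookrightarrow L^{\alpha+1}(\Omega)$ is compact, so $\delta_k\weakly\delta$ improves to $\delta_k\to\delta$ in $L^{\alpha+1}(\Omega)$, forcing $\delta=0$ a.e.\ on $\{y=0\}$.

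For the liminf estimate \eqref{eq:delta_liminf}, I would pass (without relabeling) to a subsequence realizing the liminf of $\int_\Omega I_k\,\dd x$, and then to a further subsequence along which $\delta_k\to\delta$ a.e.\ in $\Omega$. Applying \eqref{eq:tech1} with $\beta=\alpha$, $x=y(\omega)$, $z=\delta_k(\omega)$, $t=\tau_k$ decomposes $I_k/(\alpha+1)$ on $\{y\neq 0\}$ into two nonnegative pieces: $|y|^\alpha$ times the quantity in \eqref{eq:tech2}, and $\alpha$ times the product of $\tau_k^{-2}(|y+\tau_k\delta_k|-|y|)^2$ with the inner $s$-integral. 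A trivial sign case check shows the numerator $|y|(|y+\tau_k\delta_k|-|y|)-\tau_k y\delta_k$ in \eqref{eq:tech2} vanishes whenever $\sgn(y+\tau_k\delta_k)=\sgn(y)$, which by $\tau_k\delta_k\to 0$ a.e.\ is the case eventually at almost every point of $\{y\neq 0\}$; hence the first piece tends to $0$ a.e.\ on $\{y\neq 0\}$. For the second piece, \eqref{eq:tech3} combined with a.e.\ convergence yields $\tau_k^{-2}(|y+\tau_k\delta_k|-|y|)^2\to \delta^2$, while for $k$ large the integrand $(1-s)((1-s)|y|+s|y+\tau_k\delta_k|)^{\alpha-1}$ is dominated pointwise on $\{y\neq 0\}$ by the $s$-independent (and hence integrable) function $(|y|/2)^{\alpha-1}$, so dominated convergence in $s$ gives $\int_0^1 (1-s)((1-s)|y|+s|y+\tau_k\delta_k|)^{\alpha-1}\,\dd s\to \tfrac{1}{2}|y|^{\alpha-1}$ a.e.\ on $\{y\neq 0\}$. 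Combining these observations, $I_k/(\alpha+1)\to \tfrac{\alpha}{2}\delta^{2}/|y|^{1-\alpha}$ a.e.\ on $\{y\neq 0\}$, and Fatou's lemma applied to the nonnegative $I_k$ delivers \eqref{eq:delta_liminf}.

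The main obstacle is the delicate limit transition in the nonsmooth terms: plain weak $H_0^1(\Omega)$-convergence of $\{\delta_k\}$ cannot by itself be passed inside the $\alpha$-power, and the singular weight $|y|^{\alpha-1}$ that emerges in the limit is not uniformly integrable. Both difficulties are circumvented by extracting an a.e.\ convergent subsequence, exploiting the sign case check to annihilate the \eqref{eq:tech2}-piece, and using Fatou's lemma (rather than insisting on equality) to absorb the singular weight on the right-hand side.
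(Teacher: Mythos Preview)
Your proof is correct and follows the same strategy as the paper: test \eqref{eq:diffquotVI} with $z=0$, exploit the convexity-induced nonnegativity of $I_k$ together with the $\tau_k^{\alpha-1}$ blow-up on $\{y=0\}$ and a compact Sobolev embedding for \eqref{eq:delta_vanish}, and then combine the expansion \eqref{eq:tech1} with Fatou's lemma on $\{y\neq 0\}$ for \eqref{eq:delta_liminf}. The only minor deviations are that the paper uses the $L^2(\Omega)$- rather than the $L^{\alpha+1}(\Omega)$-embedding, leaves the subsequence extraction implicit, and discards the \eqref{eq:tech2}-piece directly by its nonnegativity (convexity of $|\cdot|$) instead of showing it tends to zero a.e.\ via your sign case check.
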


\begin{proof}
By choosing the test function $z=0$ in the VI \eqref{eq:diffquotVI} for $\delta_k$, 
we obtain that 
\begin{equation}
\label{eq:longestimate}
\begin{aligned}
&\langle h, \delta_k\rangle_{H_0^1(\Omega)}
\\
&\quad\geq 
\langle h, \delta_k\rangle_{H_0^1(\Omega)}
-
\| \delta_k \| _{H_0^1(\Omega)}^2
\\
&\quad\geq 
\frac{1}{(\alpha+1)}
\int_\Omega
\frac{1}{\tau_k}
\left (
\frac{| y + \tau_k \delta_k|^{\alpha +1} - | y|^{\alpha+1}}{\tau_k}
-
(\alpha+1)
\sgn(y)|y|^{\alpha}  \delta_k
\right )
\dd x
\\
&\quad
=
\frac{1}{(\alpha+1)}
\int_{\{y \neq 0\}}
\frac{1}{\tau_k}
\left (
\frac{| y + \tau_k \delta_k|^{\alpha +1} - | y|^{\alpha+1}}{\tau_k}
-
(\alpha+1)
\sgn(y)|y|^{\alpha}  \delta_k
\right )
\dd x
\\
&\qquad\quad + 
\frac{\tau_k^{\alpha-1}}{(\alpha+1)}
\int_{\{y = 0\}}
 | \delta_k|^{\alpha +1} 
\dd x.
\end{aligned}
\end{equation}
Note that the left-hand side of \eqref{eq:longestimate} is bounded. 
This establishes the first inequality in 
\eqref{eq:delta_liminf}. As difference quotients of 
convex functions dominate the derivatives that 
they approximate, we further have that the integrands 
in the integrals in \eqref{eq:longestimate} are all nonnegative. 
In combination with the convergence $\delta_k \to \delta$ in $L^2(\Omega)$
obtained from the compactness of the embedding $H_0^1(\Omega) \hookrightarrow L^2(\Omega)$,
this yields
\[
0
= 
\limsup_{k \to \infty}\,
(\alpha+1)\tau_k^{1-\alpha}
\langle h, \delta_k\rangle_{H_0^1(\Omega)}
\geq
\limsup_{k \to \infty}
\int_{\{y = 0\}}
 | \delta_k|^{\alpha +1} 
\dd x
=
\int_{\{y = 0\}}
 | \delta |^{\alpha +1} 
\dd x.
\]
Thus, $\delta$ satisfies \eqref{eq:delta_vanish} as claimed.
By revisiting  \eqref{eq:longestimate}, 
by noting that the first term on the 
right-hand side of \eqref{eq:tech1} 
is nonnegative (again due to the properties of difference 
quotients of convex functions),
and by invoking Fatou's lemma
\cite[Corollary~2.8.4]{Bocharev2007}
(keeping in mind that all of the 
involved integrands are nonnegative and 
that  $\delta_k \to \delta$ holds in $L^2(\Omega)$),
we further obtain that
\begin{equation*}
\begin{aligned}
&\liminf_{k \to \infty}
\left [
\int_\Omega
\frac{1}{\tau_k}
\left (
\frac{| y + \tau_k \delta_k|^{\alpha +1} - | y|^{\alpha+1}}{\tau_k}
-
(\alpha+1)
\sgn(y)|y|^{\alpha}  \delta_k
\right )
\dd x
\right ]
\\
&\geq 
\liminf_{k \to \infty}
\left [
\int_{\{y \neq 0\}}
\frac{1}{\tau_k}
\left (
\frac{| y + \tau_k \delta_k|^{\alpha +1} - | y|^{\alpha+1}}{\tau_k}
-
(\alpha+1)
\sgn(y)|y|^{\alpha}  \delta_k
\right )
\dd x
\right ]
\\
&\geq 
\liminf_{k \to \infty}
\Bigg [
\int_{\{y \neq 0\}}
\left ( \frac{| y + \tau_k \delta_k | - |  y | }{\tau_k}  \right )^2
\\
&\hspace{3cm}
\cdot (\alpha^2 + \alpha)
\int_{0}^{1} ( 1 -s)  \Big ( (1-s) | y |  + s | y + \tau_k \delta_k|  \Big )^{\alpha - 1} \dd s
\dd x
\Bigg]
\\
&\geq
\frac{\alpha^2 + \alpha}{2}
\int_{\{y \neq 0\}}
\frac{\delta^2}{| y |^{1-\alpha}}
\dd x.
\end{aligned}
\end{equation*}
This completes the proof. 
\end{proof}

\Cref{lem:lim_transition_1} motivates the following definition.

\begin{definition}[weighted Sobolev space]
Given a right-hand side 
$u \in H^{-1}(\Omega)$ with associated state
$y := S(u)$, we denote by $V_y$ the subspace of $H_0^1(\Omega)$ given by 
\begin{equation*}
\begin{aligned}
V_y
:=
\Bigg \{
v \in H_0^1(\Omega)
\colon
\int_{\{y  \neq 0\}} \frac{v^2}{|y|^{1 - \alpha}} \dd x
< \infty
~
\text{and }
v=0 \text{ a.e.\ in } \{y  = 0\}
\Bigg \}.
\end{aligned}
\end{equation*}
We further define 
$(\cdot, \cdot)_{V_y}$ to be the bilinear form 
\begin{equation*}
(\cdot, \cdot)_{V_y}\colon V_y \times V_y \to \R,
\qquad 
(v, w)_{V_y}
:=
(v,w)_{H_0^1(\Omega)}
+
\alpha
\int_{\{y  \neq 0\}} \frac{v w}{|y|^{1 - \alpha}} \dd x.
\end{equation*}
\end{definition}

Note that $V_y$ is Hilbert when endowed with 
$(\cdot, \cdot)_{V_y}$ as the inner product and that the limit $\delta$ 
in \cref{lem:lim_transition_1} satisfies 
$\delta \in V_y$ by \eqref{eq:delta_vanish} and \eqref{eq:delta_liminf}.
The next lemma shows how to handle 
the limit transition with the 
nonsmooth terms involving the test function $z$ in 
the variational inequality \eqref{eq:diffquotVI}.

\begin{lemma}[behavior of the $\boldsymbol{\alpha}$-$\boldsymbol{z}$-terms]
\label{lem:lim_transition_2}
Let $u \in H^{-1}(\Omega)$ be given. Define $y := S(u)$. 
Suppose that $\{\tau_k\} \subset (0,\infty)$ is a sequence 
satisfying 
$\tau_k \to 0$. Then, for all $z \in V_y$, we have
\begin{equation*}
\begin{aligned}
&\lim_{k \to \infty}
\frac{1}{(\alpha+1)}
\int_\Omega
\frac{1}{\tau_k}
\left (
\frac{| y + \tau_k z|^{\alpha +1} - | y|^{\alpha+1}}{\tau_k}
-
(\alpha+1)
\sgn(y)|y|^{\alpha}  z
\right )
\dd x
\\
&
=
\frac{\alpha}{2}
\int_{\{y \neq 0\}}
\frac{z^2}{|y|^{1-\alpha}} 
\dd x.
\end{aligned}
\end{equation*}
\end{lemma}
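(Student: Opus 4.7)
The plan is to use Taylor's theorem with integral remainder to rewrite the integrand as an expression involving $|y+\tau_k rz|^{\alpha-1}$, then to split the domain into a ``safe'' and a ``bad'' region according to the size of $|y|$ relative to $\tau_k |z|$, and to handle each region by dominated convergence. The main obstacle will be controlling the bad region, where $|y+\tau_k rz|$ can pass through zero and the integrand blows up; the trick will be a double use of the defining inequality of that region together with $z \in V_y$.

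First I would observe that since $z \in V_y$ vanishes a.e.\ on $\{y=0\}$, all integrals may be restricted to $\{y \neq 0\}$. On this set, the function $[0,\tau_k] \ni t \mapsto |y+tz|^{\alpha+1}$ is $C^1$ (with derivative $(\alpha+1)\sgn(y+tz)|y+tz|^\alpha z$) and has an integrable second derivative $(\alpha+1)\alpha|y+tz|^{\alpha-1}z^2$. The integral form of Taylor's theorem together with the substitution $t = \tau_k r$ therefore yields, pointwise for a.e.\ $x \in \{y \neq 0\}$,
\[
f_k(x) := \frac{1}{(\alpha+1)\tau_k}\left(\frac{|y+\tau_k z|^{\alpha+1}-|y|^{\alpha+1}}{\tau_k} - (\alpha+1)\sgn(y)|y|^\alpha z\right) = \alpha z^2\int_0^1 (1-r)|y+\tau_k r z|^{\alpha-1}\,\dd r.
\]
Since the integrand clearly converges pointwise to $\frac{\alpha}{2}z^2/|y|^{1-\alpha}$, the task reduces to justifying the passage to the limit under the integral sign.

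Next I would split $\{y \neq 0\} = E_k^1 \cup E_k^2$ with $E_k^1 := \{|y| \geq 2\tau_k|z|\}$ and $E_k^2 := \{y \neq 0\} \setminus E_k^1$. On $E_k^1$ one has $|y+\tau_k rz| \geq |y|/2$ uniformly in $r \in [0,1]$, so (using $\alpha-1 < 0$) $|y+\tau_k r z|^{\alpha-1} \leq 2^{1-\alpha}|y|^{\alpha-1}$, providing the uniform bound $\mathds{1}_{E_k^1} f_k \leq 2^{-\alpha}\alpha\, z^2/|y|^{1-\alpha}$. This dominating function lies in $L^1(\{y \neq 0\})$ since $z \in V_y$, and on any fixed $x \in \{y \neq 0\}$ we have $x \in E_k^1$ for all sufficiently large $k$; hence the dominated convergence theorem gives
\[
\int_{E_k^1} f_k \,\dd x \;\longrightarrow\; \frac{\alpha}{2}\int_{\{y \neq 0\}}\frac{z^2}{|y|^{1-\alpha}}\,\dd x.
\]

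Finally I would show $\int_{E_k^2} f_k \,\dd x \to 0$. On $E_k^2$ we have $z \neq 0$, so setting $a := -y/(\tau_k z)$ yields $|a| < 2$ and $|y+\tau_k rz| = \tau_k|z|\,|r-a|$, whence $\int_0^1 (1-r)|r-a|^{\alpha-1}\,\dd r \leq C_\alpha$ uniformly in $|a| < 2$ (the singularity $|r-a|^{\alpha-1}$ being integrable since $\alpha > 0$). This gives $\mathds{1}_{E_k^2} f_k \leq \alpha C_\alpha \tau_k^{\alpha-1}|z|^{\alpha+1}$. Now the key step: writing $|z|^{\alpha+1} = z^2|z|^{\alpha-1}$ and using $|z| \geq |y|/(2\tau_k)$ on $E_k^2$ together with $\alpha-1 < 0$ produces $|z|^{\alpha-1} \leq (2\tau_k)^{1-\alpha}|y|^{\alpha-1}$, so that
\[
\tau_k^{\alpha-1}|z|^{\alpha+1} \leq 2^{1-\alpha}\,\frac{z^2}{|y|^{1-\alpha}} \quad \text{on } E_k^2.
\]
Hence $\int_{E_k^2} f_k \,\dd x \leq C \int_{\{y\neq 0\}} \mathds{1}_{E_k^2}\, z^2/|y|^{1-\alpha}\,\dd x$, and the right-hand side tends to zero by dominated convergence (the dominating function $z^2/|y|^{1-\alpha}$ is integrable by $z \in V_y$, and $\mathds{1}_{E_k^2} \to 0$ pointwise on $\{y \neq 0\}$). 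Adding the contributions from $E_k^1$ and $E_k^2$ completes the proof.
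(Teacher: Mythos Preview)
Your proof is correct, and it takes a genuinely different route from the paper's. The paper starts from the same Taylor-type identity but then decomposes the integrand further, using \cref{lemma:technical_1} (the identities \eqref{eq:tech1}--\eqref{eq:tech3}), into three terms each of which is shown by elementary pointwise estimates to be bounded by a constant times $z^2/|y|^{1-\alpha}$ on $\{y\neq 0\}$, uniformly in $k$. A single application of dominated convergence then gives the result. In contrast, you keep the integral remainder $\alpha z^2\int_0^1 (1-r)|y+\tau_k r z|^{\alpha-1}\,\dd r$ intact and split the \emph{domain} according to whether $|y|\geq 2\tau_k|z|$; the safe region is handled by the obvious bound $|y+\tau_k r z|\geq |y|/2$, and the bad region by the nice observation that the defining inequality $|z|>|y|/(2\tau_k)$ converts $\tau_k^{\alpha-1}|z|^{\alpha+1}$ back into a multiple of $z^2/|y|^{1-\alpha}$. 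The paper's approach buys a single uniform majorant on all of $\{y\neq 0\}$ (at the cost of the algebraic preparation in \cref{lemma:technical_1}), whereas your approach avoids that lemma entirely and is arguably more transparent about where the difficulty lies, at the price of two separate DCT arguments. Both are clean; neither is strictly simpler.
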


\begin{proof}
Due to the identities in \cref{lemma:technical_1}, it holds
\begin{equation*}
\begin{aligned}
&\frac{1}{\tau_k}
\left (
\frac{| y + \tau_k z|^{\alpha +1} - | y|^{\alpha+1}}{\tau_k}
-
(\alpha+1)
\sgn(y)|y|^{\alpha}  z
\right )
\\
&= 
 (\alpha+1)
 \frac{ |y|(|y + \tau_k z|   - |y|)  - \tau_k   y z  }{(|y + \tau_k z| + |y|)^2 } \frac{z^2}{| y |^{1-\alpha}}
 \\
 &
 +
 (\alpha^2 + \alpha)
 \left (
 \frac{ 4 y^2  }{(|y| + |y + \tau_k z|)^2}
 \right )  
\int_{0}^{1} ( 1 -s)  \left( \frac{| y |}{(1-s) | y |  + s | y + \tau_k z |}   \right)^{1-\alpha} \dd s \, \frac{z^2}{| y |^{1-\alpha}}
 \\
 &
 +
 (\alpha^2 + \alpha)
 \left (
  \frac{ 4\tau_k y z    +   \tau_k^2 z^2 }{(|y| + |y + \tau_k z|)^2}
 \right ) \int_{0}^{1} ( 1 -s)  \left( \frac{| y |}{(1-s) | y |  + s | y + \tau_k z |}   \right)^{1-\alpha} \dd s \, \frac{z^2}{| y |^{1-\alpha}}
\end{aligned}
\end{equation*}
a.e.\ in $\{y \neq 0\}$. 
From the triangle inequality and elementary estimates, we further  obtain that we have
\begin{equation*}
\begin{aligned}
\left |
\frac{ |y|(|y + \tau_k z|   - |y|)  - \tau_k   y z  }{(|y + \tau_k z| + |y|)^2 }
\right |
&\leq
\frac{ |y + \tau_k z|  + |y|   + \tau_k  |z| }{|y + \tau_k z| + |y|  }
\leq 2
\end{aligned}
\end{equation*}
and
\begin{equation*}
\begin{aligned}
\left (
 \frac{ 4 y^2  }{(|y| + |y + \tau_k z|)^2}
 \right ) 
\int_{0}^{1} ( 1 -s)  \left( \frac{| y |}{(1-s) | y |  + s | y + \tau_k z |}   \right)^{1-\alpha} \dd s 
&\leq
4 
\int_{0}^{1} (1-s)^{\alpha} \dd s 
\\
&\leq
\frac{4}{\alpha + 1}
\end{aligned}
\end{equation*}
and
\begin{equation*}
\begin{aligned}
&\left  |
  \frac{ 4\tau_k y z    +   \tau_k^2 z^2 }{(|y| + |y + \tau_k z|)^2}
 \right | \int_{0}^{1} ( 1 -s)  \left( \frac{| y |}{(1-s) | y |  + s | y + \tau_k z |}   \right)^{1-\alpha} \dd s
 \\
 &\leq
 \ 
  \frac{ 4\tau_k |y| |z|    +   \tau_k^2 z^2 }{(|y| + |y + \tau_k z|)^2}
  \int_{0}^{1} (1-s)^{\alpha} \dd s 
  \\
  &\leq
  \frac{5}{\alpha + 1}
\end{aligned}
\end{equation*}
a.e.\ in  $\{y \neq 0\}$. As $z=0$ holds a.e.\ in $\{y=0\}$
and since  $z^2|y|^{\alpha - 1}$ is  an element of  $L^1(\{y \neq 0\})$ by our assumption $z \in V_y$, 
the above allows us to 
invoke Lebesgue's dominated convergence theorem 
\cite[Theorem~2.8.1]{Bocharev2007}
to obtain 
\begin{equation*}
\begin{aligned}
&\lim_{k \to \infty}
\frac{1}{(\alpha+1)}
\int_\Omega
\frac{1}{\tau_k}
\left (
\frac{| y + \tau_k z|^{\alpha +1} - | y|^{\alpha+1}}{\tau_k}
-
(\alpha+1)
\sgn(y)|y|^{\alpha}  z
\right )
\dd x
\\
&=
\lim_{k \to \infty}
\frac{1}{(\alpha+1)}
\int_{\{y \neq 0\}}
\frac{1}{\tau_k}
\left (
\frac{| y + \tau_k z|^{\alpha +1} - | y|^{\alpha+1}}{\tau_k}
-
(\alpha+1)
\sgn(y)|y|^{\alpha}  z
\right )
\dd x
\\
&=
\frac{\alpha}{2}
\int_{\{y \neq 0\}}
\frac{z^2}{|y|^{1-\alpha}} 
\dd x.
\end{aligned}
\end{equation*}
This completes the proof. 
\end{proof}

By putting everything together, 
we can now pass to the limit in 
\eqref{eq:diffquotVI} to arrive at a VI 
for the weak accumulation points of the difference quotients $\{\delta_\tau\}$ for $\tau \to 0$. 

\begin{proposition}[VI for limits of difference quotients]
\label{prop:limitVI}
Let $u, h \in H^{-1}(\Omega)$ be given. Define $y := S(u)$. 
Suppose that $\delta_k := \delta_{\tau_k}$
are  difference quotients as in \eqref{eq:diffquot_def} 
associated with a sequence  $\{\tau_k\} \subset (0,\infty)$.
Assume that 
$\tau_k \to 0$ holds and that $\delta_k \weakly \delta$ in $H_0^1(\Omega)$ for some 
$\delta \in H_0^1(\Omega)$.
Then $\delta_k$ converges even strongly in $H_0^1(\Omega)$ to $\delta$ and 
$\delta$ is the solution of the variational inequality 
\begin{equation}
\label{eq:derivativeVI}
\begin{aligned}
&\delta \in V_y, 
\\
&( \delta  , z - \delta )_{H_0^1(\Omega)}
+
\frac{\alpha}{2}
\int_{\{y \neq 0\}}
\frac{z^2}{|y|^{1 - \alpha}} \dd x
-
\frac{\alpha}{2}
\int_{\{y \neq 0\}}
\frac{\delta^2}{|y|^{1 - \alpha}} \dd x
\geq 
\langle h,z -\delta \rangle_{H_0^1(\Omega)}
\\
&\hspace{10.3cm}
\forall z \in V_y.
\end{aligned}
\end{equation}
\end{proposition}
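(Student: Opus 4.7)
The plan is to take the variational inequality \eqref{eq:diffquotVI} satisfied by $\delta_k$ and pass to the limit $k \to \infty$, using \cref{lem:lim_transition_1,lem:lim_transition_2} to control the nonsmooth integrand terms. The first thing I would do is to reorganize \eqref{eq:diffquotVI} in the form
\begin{equation*}
(\delta_k, z)_{H_0^1(\Omega)} + I_k(z) - \langle h, z - \delta_k\rangle_{H_0^1(\Omega)}
\geq
\|\delta_k\|_{H_0^1(\Omega)}^2 + I_k(\delta_k),
\end{equation*}
where $I_k(v)$ denotes the $\alpha$-integral with test function $v$ appearing in \eqref{eq:diffquotVI}. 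This asymmetric form is convenient because the left-hand side behaves nicely under weak convergence of $\delta_k$ while the right-hand side is amenable to a $\liminf$ argument via convexity/weak lower semicontinuity.

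Next, I would establish that $\delta \in V_y$. The inclusion $\delta = 0$ a.e.\ on $\{y = 0\}$ and the $\liminf$ lower bound on $I_k(\delta_k)$ are already provided by \cref{lem:lim_transition_1}; together with the boundedness of the left-hand side of the reorganized inequality (which follows from the $H_0^1(\Omega)$-boundedness of $\{\delta_k\}$ given by \cref{cor:SLipschitz}), one gets $\int_{\{y \neq 0\}} \delta^2 |y|^{\alpha - 1}\, \dd x < \infty$, and hence $\delta \in V_y$. Then, for an arbitrary fixed $z \in V_y$, I would take $\liminf_{k \to \infty}$ in the reorganized inequality. On the left the terms $(\delta_k, z)_{H_0^1(\Omega)}$ and $\langle h, z - \delta_k\rangle_{H_0^1(\Omega)}$ converge by weak convergence, while $I_k(z) \to \tfrac{\alpha}{2}\int_{\{y \neq 0\}} z^2 |y|^{\alpha - 1}\, \dd x$ by \cref{lem:lim_transition_2}. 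On the right, weak lower semicontinuity of the $H_0^1(\Omega)$-norm gives $\liminf \|\delta_k\|_{H_0^1(\Omega)}^2 \geq \|\delta\|_{H_0^1(\Omega)}^2$, and \cref{lem:lim_transition_1} gives $\liminf I_k(\delta_k) \geq \tfrac{\alpha}{2}\int_{\{y\neq 0\}} \delta^2 |y|^{\alpha - 1}\, \dd x$; summing these two lower bounds and rearranging yields exactly \eqref{eq:derivativeVI}.

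Finally, to upgrade weak to strong convergence of $\delta_k$ in $H_0^1(\Omega)$, I would revisit the reorganized inequality with the specific choice $z = \delta \in V_y$. This gives
\begin{equation*}
\limsup_{k \to \infty}\!\left(\|\delta_k\|_{H_0^1(\Omega)}^2 + I_k(\delta_k)\right)
\leq
\|\delta\|_{H_0^1(\Omega)}^2 + \frac{\alpha}{2}\int_{\{y \neq 0\}} \frac{\delta^2}{|y|^{1-\alpha}}\, \dd x,
\end{equation*}
using $(\delta_k, \delta)_{H_0^1(\Omega)} \to \|\delta\|_{H_0^1(\Omega)}^2$, \cref{lem:lim_transition_2} for $I_k(\delta)$, and the weak-strong convergence pairing in the duality term. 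Combined with the matching $\liminf$ bound just obtained and the fact that both constituent $\liminf$s are individually bounded below by the respective summands of the right-hand side, a standard squeeze argument forces $\lim \|\delta_k\|_{H_0^1(\Omega)}^2 = \|\delta\|_{H_0^1(\Omega)}^2$, which together with weak convergence yields $\delta_k \to \delta$ strongly in $H_0^1(\Omega)$.

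The main obstacle will not be the arithmetic but making the $\liminf$ manipulations rigorous: one must be careful that the two nonnegative quantities $\|\delta_k\|_{H_0^1(\Omega)}^2$ and $I_k(\delta_k)$ are genuinely decoupled in the squeeze step, which is why \cref{lem:lim_transition_1} is formulated as a pure lower bound on $I_k(\delta_k)$ independent of the $H_0^1(\Omega)$-norm term, and why the test choice $z=\delta$ together with \cref{lem:lim_transition_2} is crucial for producing the matching upper bound on the sum.
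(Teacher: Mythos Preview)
Your proposal is correct and follows essentially the same route as the paper: reorganize \eqref{eq:diffquotVI} so that the $\|\delta_k\|_{H_0^1(\Omega)}^2 + I_k(\delta_k)$ terms sit on one side, pass to the limit using \cref{lem:lim_transition_1,lem:lim_transition_2} and weak lower semicontinuity to obtain \eqref{eq:derivativeVI}, and then test with $z=\delta$ to squeeze out norm convergence. The paper packages the limit passage into a single $\limsup$/$\liminf$ chain (its display \eqref{eq:randomeq3636}) rather than separating the derivation of the VI from the strong-convergence step, but the logical content is identical.
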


\begin{proof}
Due to \eqref{eq:diffquotVI}, we know that 
\begin{equation*}
\begin{aligned}
&
\langle h, \delta_k - z \rangle_{H_0^1(\Omega)}
+
( \delta_k , z)_{H_0^1(\Omega)}
\\
&\quad+
\frac{1}{(\alpha+1)}
\int_\Omega
\frac{1}{\tau_k}
\left (
\frac{| y + \tau_k z|^{\alpha +1} - | y|^{\alpha+1}}{\tau_k}
-
(\alpha+1)
\sgn(y)|y|^{\alpha}  z
\right )
\dd x
\\
&
\geq 
\| \delta_k\|_{H_0^1(\Omega)}^2
+
\frac{1}{(\alpha+1)}
\int_\Omega
\frac{1}{\tau_k}
\left (
\frac{| y + \tau_k \delta_k|^{\alpha +1} - | y|^{\alpha+1}}{\tau_k}
-
(\alpha+1)
\sgn(y)|y|^{\alpha}  \delta_k
\right )
\dd x
\end{aligned}
\end{equation*}
holds for all $k$ and all $z \in V_y$. 
By taking the limes superior for $k\to \infty$ on the left and the right of 
this inequality, by exploiting  the weak lower semicontinuity of the function
$H_0^1(\Omega) \ni v \mapsto \|v\|_{H_0^1(\Omega)}^2\in \R$,
and by  using 
\cref{lem:lim_transition_1,lem:lim_transition_2}, we get 
\begin{equation}
\label{eq:randomeq3636}
\begin{aligned}
&
\langle h, \delta - z \rangle_{H_0^1(\Omega)}
+
( \delta , z)_{H_0^1(\Omega)}
+
\frac{\alpha}{2}
\int_{\{y \neq 0\}}
\frac{z^2}{|y|^{1-\alpha}} 
\dd x
\\
&
\geq 
\frac{\alpha}{2}
\int_{\{y \neq 0\}}
\frac{\delta^2}{|y|^{1 - \alpha}} \dd x
+
\limsup_{k \to \infty}
\| \delta_k\|_{H_0^1(\Omega)}^2
\\
&
\geq 
\frac{\alpha}{2}
\int_{\{y \neq 0\}}
\frac{\delta^2}{|y|^{1 - \alpha}} \dd x
+
\liminf_{k \to \infty}
\| \delta_k\|_{H_0^1(\Omega)}^2
\\
&
\geq 
\frac{\alpha}{2}
\int_{\{y \neq 0\}}
\frac{\delta^2}{|y|^{1 - \alpha}} \dd x
+
\| \delta \|_{H_0^1(\Omega)}^2
\qquad \forall z \in V_y.
\end{aligned}
\end{equation}
This shows that $\delta$ satisfies \eqref{eq:derivativeVI}
and, since the choice $z=\delta \in V_y$ is allowed in \eqref{eq:randomeq3636} by \cref{lem:lim_transition_1},
that $\|\delta_k\|_{H_0^1(\Omega)} \to \|\delta\|_{H_0^1(\Omega)}$ holds. 
In combination with the weak convergence $\delta_k \weakly \delta$ in $H_0^1(\Omega)$
and the binomial identities, it now follows immediately
 that $\delta_k$ converges also strongly in 
$H_0^1(\Omega)$ to $\delta$  and the proof is complete.
\end{proof}

As a straightforward consequence of \cref{prop:limitVI}, we now obtain our first main result.

\begin{theorem}[G\^{a}teaux differentiability of the solution operator $\boldsymbol{S}$]
\label{th:Sgateaux}
The solution operator 
$S\colon H^{-1}(\Omega) \to H_0^1(\Omega)$ of the PDE 
\eqref{eq:govPDE} is G\^{a}teaux differentiable at all 
points $u \in H^{-1}(\Omega)$. Further,
the G\^{a}teaux derivative $S'(u) \in \LL(H^{-1}(\Omega), H_0^1(\Omega))$
of $S$ at a point $u \in H^{-1}(\Omega)$ with state $y:=S(u)$ is precisely the solution 
operator $H^{-1}(\Omega) \ni h \mapsto \delta \in V_y \subset H_0^1(\Omega)$ 
of the variational problem 
\begin{equation}
\label{eq:DirDiffPDE}
\delta \in V_y,
\qquad 
(\delta, z)_{V_y}
=
\langle h,z \rangle_{H_0^1(\Omega)}
\quad \forall z \in V_y.
\end{equation}
\end{theorem}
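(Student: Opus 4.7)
The strategy is the classical subsequence/uniqueness argument. Fix $u, h \in H^{-1}(\Omega)$ and set $y := S(u)$. By \cref{cor:SLipschitz}, the difference quotients satisfy $\|\delta_\tau\|_{H_0^1(\Omega)} \leq \|h\|_{H^{-1}(\Omega)}$, so the family $\{\delta_\tau\}_{\tau>0}$ is bounded in $H_0^1(\Omega)$. Hence every sequence $\tau_k \to 0^+$ admits a subsequence (not relabeled) along which $\delta_{\tau_k} \weakly \delta$ in $H_0^1(\Omega)$ for some $\delta \in H_0^1(\Omega)$. By \cref{prop:limitVI}, along every such subsequence the convergence is in fact strong in $H_0^1(\Omega)$, and the limit $\delta$ lies in $V_y$ and solves the variational inequality \eqref{eq:derivativeVI}.

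The crux is to show that \eqref{eq:derivativeVI} has a unique solution, which coincides with the solution of the variational equation \eqref{eq:DirDiffPDE}. Since $(V_y, (\cdot,\cdot)_{V_y})$ is Hilbert and since the continuous embedding $V_y \hookrightarrow H_0^1(\Omega)$ (which follows from $\|v\|_{H_0^1(\Omega)} \leq \|v\|_{V_y}$ by the definition of $(\cdot,\cdot)_{V_y}$) makes $z \mapsto \langle h, z\rangle_{H_0^1(\Omega)}$ a bounded linear functional on $V_y$, the Riesz representation theorem yields a unique $\delta \in V_y$ satisfying \eqref{eq:DirDiffPDE}. I then verify the equivalence of \eqref{eq:DirDiffPDE} and \eqref{eq:derivativeVI}. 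For one direction, substituting $z = \delta \pm t w$ with $w \in V_y$ and $t > 0$ into \eqref{eq:derivativeVI}, dividing by $t$, and sending $t \to 0^+$ produces $(\delta, w)_{V_y} = \langle h, w\rangle_{H_0^1(\Omega)}$. For the converse, one rewrites \eqref{eq:DirDiffPDE} with test function $w := z - \delta \in V_y$ and uses the elementary pointwise inequality $2\delta (z-\delta) \leq z^2 - \delta^2$, multiplied by $\alpha/|y|^{1-\alpha}$ and integrated over $\{y \neq 0\}$, to recover \eqref{eq:derivativeVI}.

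With uniqueness of the solution to \eqref{eq:derivativeVI} in hand, a standard subsequence--subsubsequence contradiction argument shows that the whole family $\delta_\tau$ converges strongly in $H_0^1(\Omega)$ to the same $\delta$ as $\tau \to 0^+$. This identifies the directional derivative $S'(u;h) = \lim_{\tau\to 0^+}\delta_\tau$ with the solution map of \eqref{eq:DirDiffPDE}. Linearity of $h \mapsto S'(u;h)$ is immediate from the linearity of \eqref{eq:DirDiffPDE}, and continuity as an operator from $H^{-1}(\Omega)$ into $H_0^1(\Omega)$ follows by testing \eqref{eq:DirDiffPDE} with $z = \delta$, which gives
\[
\|\delta\|_{H_0^1(\Omega)}^2 \leq \|\delta\|_{V_y}^2 = \langle h,\delta\rangle_{H_0^1(\Omega)} \leq \|h\|_{H^{-1}(\Omega)}\|\delta\|_{H_0^1(\Omega)}.
\]
Hence $S'(u) \in \LL(H^{-1}(\Omega), H_0^1(\Omega))$, and $S$ is G\^{a}teaux differentiable at $u$ with the claimed derivative. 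The only step that genuinely requires care is the VI--equation equivalence of the previous paragraph; all heavier analysis has already been absorbed into \cref{lem:lim_transition_1,lem:lim_transition_2,prop:limitVI}, so the present proof essentially reduces to a Lax--Milgram/Riesz application combined with a subsequence argument.
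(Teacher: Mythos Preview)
Your proof is correct and follows essentially the same route as the paper: boundedness of $\{\delta_\tau\}$ via \cref{cor:SLipschitz}, identification of all accumulation points as solutions of \eqref{eq:derivativeVI} via \cref{prop:limitVI}, uniqueness of the limit, and the linearity/continuity conclusion through the Riesz characterization \eqref{eq:DirDiffPDE}. The only minor difference is that you establish uniqueness of \eqref{eq:derivativeVI} by proving its full equivalence with \eqref{eq:DirDiffPDE} (both directions), whereas the paper cites VI uniqueness directly via a monotonicity argument and then derives only the implication \eqref{eq:derivativeVI} $\Rightarrow$ \eqref{eq:DirDiffPDE} by testing with $\delta + s z$.
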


\begin{proof}
Let $u, h \in H^{-1}(\Omega)$ be fixed and define $y := S(u)$.
From \eqref{eq:SLipschitz-SS}, we obtain that the 
family of difference quotients $\{\delta_\tau\}$ associated with $u$ and $h$ is bounded in $H_0^1(\Omega)$ 
and, thus, possesses weak accumulation points in $H_0^1(\Omega)$ for $\tau \to 0$.
From \cref{prop:limitVI}, we obtain that all of these 
weak accumulation points are, in fact, strong accumulation
points of the family $\{\delta_\tau\}$ for $\tau \to 0$ and 
solutions of the variational inequality \eqref{eq:derivativeVI}. 
As \eqref{eq:derivativeVI} can have at most one solution
(cf.\  \cite[Theorem 4.1]{Glowinski2013}),
it follows by contradiction that there can only be one accumulation point $\delta$, 
that $\delta_\tau \to \delta$ holds in $H_0^1(\Omega)$ for $\tau \to 0$,
and that $\delta$ is uniquely characterized by \eqref{eq:derivativeVI}. 
This shows that $S$ is directionally differentiable at $u$ in direction $h$
with directional derivative $\delta$. 
By choosing test functions of the 
form 
$\delta + s z$ in \eqref{eq:derivativeVI} for arbitrary $s \in (0,1)$ 
and $z \in V_y$,
we obtain the inequality 
\begin{equation*}
\begin{aligned}
&( \delta  ,  s z )_{H_0^1(\Omega)}
+
\frac{\alpha}{2}
\int_{\{y \neq 0\}}
\frac{(\delta + s z)^2}{|y|^{1 - \alpha}} \dd x
-
\frac{\alpha}{2}
\int_{\{y \neq 0\}}
\frac{\delta^2}{|y|^{1 - \alpha}} \dd x
\geq 
\langle h,  s z \rangle_{H_0^1(\Omega)}
\end{aligned}
\end{equation*}
which, by means of the binomial identities, can also be written as 
\begin{equation*}
\begin{aligned}
&s \left [ ( \delta  ,  z )_{H_0^1(\Omega)}
+
 \alpha 
\int_{\{y \neq 0\}}
\frac{  \delta  z}{|y|^{1 - \alpha}} \dd x
+ 
\frac{\alpha s}{2}
\int_{\{y \neq 0\}}
\frac{z^2}{|y|^{1 - \alpha}} \dd x
\right ]
\geq s
\langle h,   z \rangle_{H_0^1(\Omega)}.
\end{aligned}
\end{equation*}
If we divide by $s$ in the above, let $s$ go to zero,
exploit that $z$ was an arbitrary element of the vector space $V_y$,
and plug in the definition of $(\cdot, \cdot)_{V_y}$, then 
we obtain that $\delta$ satisfies \eqref{eq:DirDiffPDE} as desired.
As \eqref{eq:DirDiffPDE} is precisely the variational problem 
that characterizes the Riesz representative of 
$h \in H^{-1}(\Omega) \subset V_y^*$ 
in the Hilbert space $V_y$ and since $V_y \hookrightarrow H_0^1(\Omega)$ holds,
it follows that the directional derivative $\delta \in H_0^1(\Omega)$
depends linearly and continuously on the direction $h \in H^{-1}(\Omega)$.
Thus, $S$ is G\^{a}teaux differentiable
as claimed, 
with the 
G\^{a}teaux derivative being characterized by \eqref{eq:DirDiffPDE}.
This completes the proof. 
\end{proof}

Due to the Lipschitz estimate for $S$ in \eqref{eq:SLipschitz-SS}
and the compactness of the embedding 
$L^q(\Omega) \hookrightarrow H^{-1}(\Omega)$ 
for all $ \max\left (1, 2d/(d+2) \right ) < q \leq \infty$,
\cref{th:Sgateaux} also implies 
the following Fr\'{e}chet differentiability result for $S$. 

\begin{corollary}[Fr\'{e}chet differentiability of the solution operator $\boldsymbol{S}$]
\label{cor:Frechet}
The solution map
$S$ of the PDE 
\eqref{eq:govPDE} is Fr\'{e}chet   differentiable 
as a function from $L^q(\Omega)$ to $H_0^1(\Omega)$ for all 
$ \max\left (1, 2d/(d+2) \right ) < q \leq \infty$. Its Fr\'{e}chet derivatives 
$S'(u) \in \LL(L^q(\Omega), H_0^1(\Omega))$
are characterized by the variational identity 
\eqref{eq:DirDiffPDE}
in \cref{th:Sgateaux}.
\end{corollary}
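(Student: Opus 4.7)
The plan is to promote the Gâteaux differentiability of \cref{th:Sgateaux} to Fréchet differentiability by combining the global Lipschitz estimate of \cref{cor:SLipschitz} with the compactness of the embedding $L^q(\Omega) \hookrightarrow H^{-1}(\Omega)$ for $q$ in the indicated range, exactly as one does classically for Gâteaux differentiable Lipschitz maps whose domain embeds compactly into a space on which the directional derivatives are defined. Since $S'(u) \in \LL(H^{-1}(\Omega), H_0^1(\Omega))$ by \cref{th:Sgateaux} and $L^q(\Omega) \hookrightarrow H^{-1}(\Omega)$ is continuous, the operator $S'(u)$ restricts to an element of $\LL(L^q(\Omega), H_0^1(\Omega))$, so it is the only possible candidate for the Fréchet derivative.

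Fix $u \in L^q(\Omega)$, set $y := S(u)$, and argue by contradiction: suppose $S$ is not Fréchet differentiable from $L^q(\Omega)$ to $H_0^1(\Omega)$ at $u$. Then there exist $\varepsilon > 0$ and a sequence $\{h_n\} \subset L^q(\Omega)$ with $h_n \to 0$ in $L^q(\Omega)$ such that, writing $t_n := \|h_n\|_{L^q(\Omega)} > 0$ and $g_n := h_n / t_n$, we have $\|g_n\|_{L^q(\Omega)} = 1$ and
\begin{equation*}
\frac{\|S(u + t_n g_n) - S(u) - t_n S'(u) g_n\|_{H_0^1(\Omega)}}{t_n} \geq \varepsilon \qquad \forall n \in \N.
\end{equation*}
The compactness of $L^q(\Omega) \hookrightarrow H^{-1}(\Omega)$ (which holds precisely for $q$ strictly larger than the conjugate Sobolev exponent, i.e., $\max(1, 2d/(d+2)) < q \leq \infty$) implies that, along a subsequence, $g_n \to g$ strongly in $H^{-1}(\Omega)$ for some $g \in H^{-1}(\Omega)$.

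Now I would split the error into three parts:
\begin{equation*}
\begin{aligned}
S(u + t_n g_n) - S(u) - t_n S'(u) g_n
&= \bigl[S(u + t_n g) - S(u) - t_n S'(u) g\bigr] \\
&\quad + \bigl[S(u + t_n g_n) - S(u + t_n g)\bigr] \\
&\quad + t_n \bigl[S'(u) g - S'(u) g_n\bigr].
\end{aligned}
\end{equation*}
The first bracket is $o(t_n)$ in $H_0^1(\Omega)$ by the Gâteaux differentiability of $S$ at $u$ in direction $g$ provided by \cref{th:Sgateaux}. The second bracket is bounded in $H_0^1(\Omega)$-norm by $t_n \|g_n - g\|_{H^{-1}(\Omega)} = o(t_n)$ thanks to the one-Lipschitz estimate from \cref{cor:SLipschitz}. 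The third bracket is $o(t_n)$ in $H_0^1(\Omega)$ by the continuity of $S'(u)\colon H^{-1}(\Omega) \to H_0^1(\Omega)$ and the convergence $g_n \to g$ in $H^{-1}(\Omega)$. Dividing by $t_n$ and passing to the limit contradicts the lower bound $\varepsilon > 0$, proving Fréchet differentiability. The characterization of $S'(u)$ via \eqref{eq:DirDiffPDE} is inherited directly from \cref{th:Sgateaux}.

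The main obstacle is really only ensuring that the compact embedding hypothesis is sharp and kicks in for exactly the advertised range of $q$; once compactness is available, the three-term splitting above is routine, and no new analysis of the nonsmooth term is needed because all the heavy lifting was done in \cref{prop:limitVI} and \cref{th:Sgateaux}.
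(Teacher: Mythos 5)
Your proposal is correct and follows essentially the same route as the paper's own proof: a contradiction argument that normalizes the perturbations, uses the compactness of the embedding $L^q(\Omega) \hookrightarrow H^{-1}(\Omega)$ to obtain strong $H^{-1}(\Omega)$-convergence of the directions, and then applies the identical three-term splitting controlled by the G\^{a}teaux differentiability from \cref{th:Sgateaux}, the one-Lipschitz estimate of \cref{cor:SLipschitz}, and the continuity of $S'(u)\colon H^{-1}(\Omega) \to H_0^1(\Omega)$. The only cosmetic difference is that the paper first extracts a weakly convergent subsequence in $L^q(\Omega)$ for $q<\infty$ and treats $q=\infty$ separately via H\"older's inequality, whereas you invoke the compactness of the embedding directly on the bounded sequence of normalized directions, which covers the whole range of $q$ at once.
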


\begin{proof}
The assertion follows from a classical contradiction argument. 
We begin with the case $q \neq \infty$. 
Suppose that there are $\max\left (1, 2d/(d+2) \right ) < q < \infty$ 
and $u \in L^q(\Omega)$ such that 
$S$ is not Fr\'{e}chet differentiable at $u$ as a function $S\colon L^q(\Omega) \to H_0^1(\Omega)$.
Then there exist a number $\varepsilon > 0$ and 
sequences $\{h_k\} \subset L^q(\Omega)$ and $\{\tau_k\} \subset (0,\infty)$ such that 
$\|h_k\|_{L^q(\Omega)} = 1$ 
for all $k$, $\tau_k \to 0$ for $k \to \infty$,
and
\[
\varepsilon \leq 
\liminf_{k \to \infty}
\frac{\|S(u + \tau_k h_k) - S(u) - \tau_k S'(u) h_k\|_{H_0^1(\Omega)}}{\tau_k}.
\]
As $\{h_k\} \subset L^q(\Omega)$  is bounded and $q \in (1,\infty)$, we 
may assume without loss of generality that $h_k \weakly h$ holds in $L^q(\Omega)$
for some $h \in L^q(\Omega)$. Due to the compactness 
of the embedding $L^q(\Omega) \hookrightarrow H^{-1}(\Omega)$,
\eqref{eq:SLipschitz-SS},
the G\^{a}teaux differentiability of $S$
at $u$ in direction $h$ obtained from \cref{th:Sgateaux}, 
and $S'(u) \in \LL(H^{-1}(\Omega), H_0^1(\Omega))$, 
it follows that 
\begin{equation*}
\begin{aligned}
\varepsilon 
&\leq
\liminf_{k \to \infty}
\frac{\|S(u + \tau_k h_k) - S(u) - \tau_k S'(u) h_k\|_{H_0^1(\Omega)}}{\tau_k}
\\
&\leq
\liminf_{k \to \infty}
\Bigg (
\frac{\|S(u + \tau_k h) - S(u) - \tau_k S'(u) h\|_{H_0^1(\Omega)}}{\tau_k}
\\
&\hspace{1,7cm}
+
\frac{\|S(u + \tau_k h_k) - S(u + \tau_k h)\|_{H_0^1(\Omega)}}{\tau_k}
+
 \|  S'(u)h - S'(u) h_k \|_{H_0^1(\Omega)} 
\Bigg)
\\
&\leq
0 +
\liminf_{k \to \infty} \| h_k - h\|_{H^{-1}(\Omega)} + 0
=0. 
\end{aligned}
\end{equation*}
This produces the contradiction $0 < \varepsilon \leq 0$ 
and shows that $S$ indeed has to be Fr\'{e}chet differentiable 
as a function $S\colon L^q(\Omega) \to H_0^1(\Omega)$
for all $\max\left (1, 2d/(d+2) \right ) < q < \infty$. 
To obtain the assertion for $q=\infty$,
it now suffices to use H\"older's inequality. 
This completes the proof. 
\end{proof}

We conclude this section with some comments.

\begin{remark}
\label{rem:end_sec_4}~
\begin{itemize}
\item Note that neither \cref{th:Sgateaux} nor \cref{cor:Frechet} makes a statement 
about the continuity of the derivative $u \mapsto S'(u)$. 
The question of whether/on which sets the operator $S$ is continuously 
differentiable is inherently related to the questions 
of how the space $V_y$ varies with $y$ and 
of whether the Meyers-Serrin 
identity $H=W$ holds in $V_y$.
We remark that the latter of these two questions is a particularly delicate one for the space $V_y$ as the overwhelming majority of results on the validity of the identity $H=W$ in weighted Sobolev spaces found in the literature require certain integrability assumptions on the weight function, in particular the famous Muckenhoupt $A_p$-property. Compare, for instance, 
with \cite{Zhikov1998} and
\cite[Definition 1.2.2, Corollary 2.1.6]{Turesson2000} in this context. 
As the solution $y$ of \eqref{eq:govPDE}
may be arbitrarily smooth and, thus, may go to zero arbitrarily fast when approaching the zero level set 
$\{ y = 0\}$, one cannot expect that 
the weight  $|y|^{\alpha - 1}\mathds{1}_{\{y \neq 0\}}$  satisfies such a property. As a consequence, 
the verification of the Meyers-Serrin identity $H=W$ for $V_y$ is, 
at least to the author's best knowledge, an open problem.
Compare also with the remarks after \cref{th:KKT} 
in this context. 
We leave this topic for future research. 
\item 
The analysis of this section can be extended to semilinear elliptic PDEs
whose nonlinearities are induced by continuous, nondecreasing functions $g\colon \R \to \R$
that are smooth except for finitely many, root-like cusps. 
For such semilinear equations, one has to employ a Taylor approximation near 
the cusps to arrive at results analogous to \cref{lem:lim_transition_1,lem:lim_transition_2};
cf.\ \cite[Proof of Theorem 4.2.3]{ChristofPhd2018}, where such a calculation is performed. 
The main argument, 
that has to work for 
the  analysis of this section to go through, 
is that the boundedness of the second-order difference quotients involving $\delta_\tau$
in the elliptic VI of the second kind satisfied by $\delta_\tau$ imposes
conditions on the weak accumulation 
points of the family $\{\delta_\tau\}$
that, when assumed for a test function $z$,
allow to perform the limit transition with the second-order 
difference quotients when $\delta_\tau$ is replaced by $z$.
See \cref{lem:lim_transition_1,lem:lim_transition_2}, where we have performed precisely 
these two steps. As already mentioned in the introduction, 
the method of proof that we have used in this section
can also be systematized on an abstract level, 
using the concepts of twice epi-differentiability
and second-order subderivative;
see \cite[Chapter~1]{ChristofPhd2018}.
By working with these more involved concepts and 
deeper results, it is also possible to obtain analogues
of \cref{th:Sgateaux} and  \cref{cor:Frechet} for PDEs of the form 
\begin{equation}
\label{eq:q-laplace_eq}
-\Delta y -   \nabla \cdot \left( \frac{\nabla y}{|\nabla y|^{1-\alpha}} \right )
= u \text{ in } \Omega,
\quad y = 0 \text{ on } \partial \Omega,
\end{equation}
with $\alpha \in (0,1)$; see \cite[Section 4.3.5]{ChristofPhd2018}.
We focus on the model equation
\mbox{$-\Delta y 
+ \sgn(y)|y|^{\alpha} = u$}
in this paper because 
this allows us to obtain the differentiability results 
in \cref{th:Sgateaux} and \cref{cor:Frechet} with
rather elementary means and, thus, 
without obscuring the basic ideas of the
analysis and the underlying mechanisms with technicalities.
\end{itemize}
\end{remark}

\section{Necessary optimality conditions for the optimal control problem}
\label{sec:5}

With the differentiability results in 
\cref{th:Sgateaux,cor:Frechet}
at hand, we can turn our attention to 
the derivation of first-order necessary optimality conditions for the optimal 
control problem 
\begin{equation}
\tag{P}
\left.
\begin{aligned}
\text{Minimize }& J(y,u)
\\
\text{w.r.t.\ } &y \in H_0^1(\Omega),\quad u \in L^2(\Omega),
\\
\text{ s.t.\ } & 
-\Delta y 
+ \sgn(y)|y|^{\alpha} = u \text{ in } \Omega,
\quad y = 0 \text{ on } \partial \Omega,
\\
\text{ and } & u \in \Uad.
\end{aligned}
~~\right \}
\end{equation}
We begin by recalling our assumptions 
on the quantities in \eqref{eq:P}.

\begin{assumption}[standing assumptions on (P)]\label{ass:standing:P}~
\begin{enumerate}[label=\roman*)]
\item $\Omega \subset \R^d$, $d \in \N$, is a nonempty open bounded set;
\item $J\colon H_0^1(\Omega) \times L^2(\Omega) \to \R$ is a Fr\'{e}chet differentiable function;
\item $\alpha \in (0,1)$ is a given exponent;
\item $\Uad$ is a nonempty and convex subset of $L^2(\Omega)$.
\end{enumerate}
\end{assumption}

Note that, to be able to ensure that \eqref{eq:P} possesses an optimal control-state pair, 
one requires more assumptions on 
 $\Uad$ and $J$ than stated in \cref{ass:standing:P}.
 For the discussion of necessary optimality conditions, 
 however, this is irrelevant. As a first consequence 
 of \cref{cor:Frechet}, we obtain 
 the following result. 

\begin{proposition}[Bouligand stationarity condition]
\label{prop:Bouligand}
Suppose that $\bar u \in L^2(\Omega)$ is a locally optimal 
control of \eqref{eq:P} with associated state $\bar y := S(\bar u)$, i.e., 
an $L^2(\Omega)$-local minimizer of the reduced problem
\[
\text{Minimize}~J(S(u),u)
\quad 
\text{s.t.}
\quad u \in \Uad.
\]
Then $\bar u$ is a solution of the variational inequality
\begin{equation}
\label{eq:Bouligand_condition}
\bar u \in \Uad,
\quad 
\left \langle \partial_y J(\bar y, \bar u), S'(\bar u)(u - \bar u) \right \rangle_{H_0^1(\Omega)}
+
\left ( \partial_u J(\bar y, \bar u), u - \bar u \right )_{L^2(\Omega)}
\geq 0
~~\forall u \in \Uad.
\end{equation}
Here, $ \partial_y J(\bar y, \bar u) \in H^{-1}(\Omega)$ and 
$\partial_u J(\bar y, \bar u) \in L^2(\Omega) \cong L^2(\Omega)^*$ 
denote the partial Fr\'{e}chet derivatives of $J$
w.r.t.\ $y$ and $u$ at $(\bar y, \bar u)$, respectively. 
\end{proposition}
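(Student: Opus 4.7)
The plan is to apply the standard first-order necessary condition for minimization of a Fr\'echet differentiable functional over a convex set, after first verifying that the reduced objective $\Phi \colon L^2(\Omega) \to \R$, $u \mapsto J(S(u),u)$, is indeed Fr\'echet differentiable at $\bar u$.

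First I would invoke \cref{cor:Frechet} with $q=2$ (noting that $2 \geq \max(1, 2d/(d+2))$ holds for every $d \in \N$, since $2d/(d+2) < 2$) to conclude that $S\colon L^2(\Omega) \to H_0^1(\Omega)$ is Fr\'echet differentiable at $\bar u$ with derivative $S'(\bar u) \in \LL(L^2(\Omega), H_0^1(\Omega))$ characterized by \eqref{eq:DirDiffPDE}. Combining this with the Fr\'echet differentiability of $J$ (\cref{ass:standing:P}~ii) and the chain rule for Fr\'echet derivatives of compositions of differentiable maps between Banach spaces, I obtain that $\Phi$ is Fr\'echet differentiable at $\bar u$ with derivative
\begin{equation*}
\Phi'(\bar u) h = \langle \partial_y J(\bar y, \bar u), S'(\bar u) h \rangle_{H_0^1(\Omega)} + (\partial_u J(\bar y, \bar u), h)_{L^2(\Omega)} \qquad \forall h \in L^2(\Omega).
\end{equation*}

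Next I would use the convexity of $\Uad$: given an arbitrary $u \in \Uad$, the segment $\bar u + t(u - \bar u)$, $t \in [0,1]$, lies in $\Uad$. Since $\bar u$ is an $L^2(\Omega)$-local minimizer of $\Phi$ on $\Uad$, for all sufficiently small $t>0$ we have $\Phi(\bar u + t(u - \bar u)) \geq \Phi(\bar u)$. Dividing by $t$, using Fr\'echet differentiability to write
\begin{equation*}
\Phi(\bar u + t(u - \bar u)) - \Phi(\bar u) = t\, \Phi'(\bar u)(u - \bar u) + o(t),
\end{equation*}
and letting $t \to 0^+$ yields $\Phi'(\bar u)(u - \bar u) \geq 0$. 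Substituting the formula for $\Phi'(\bar u)$ from the first step produces exactly \eqref{eq:Bouligand_condition}, and since $u \in \Uad$ was arbitrary and $\bar u \in \Uad$ by assumption, the claim follows.

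Since all the heavy lifting has already been done in \cref{cor:Frechet}, there is no real obstacle here; the only step that requires a moment's thought is confirming that $q=2$ lies in the admissible range of \cref{cor:Frechet} in every ambient dimension $d$, and that the chain rule applies cleanly despite $S$ landing in $H_0^1(\Omega)$ rather than in $L^2(\Omega)$ — which is fine because $J$ is differentiable with respect to its $H_0^1(\Omega) \times L^2(\Omega)$ arguments.
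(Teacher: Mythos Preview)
Your proposal is correct and follows essentially the same route as the paper: invoke \cref{cor:Frechet} (with $q=2$), combine with the assumed Fr\'echet differentiability of $J$ via the chain rule, and then use the convexity of $\Uad$ to obtain the variational inequality. The paper's proof is a single sentence listing exactly these ingredients; you have simply spelled out the details.
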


\begin{proof}
The assertion of this proposition follows immediately from \cref{cor:Frechet},
the Fr\'{e}chet differentiability of $J$, the chain rule, 
and the convexity of $\Uad$.
\end{proof}

By exploiting the characterization of $ S'(\bar u)$ by means of 
the variational identity \eqref{eq:DirDiffPDE},
we can reformulate 
\eqref{eq:Bouligand_condition} 
to arrive at a more tangible optimality condition. 
The resulting stationarity system is
the second main result of this work. 

\begin{theorem}[KKT-system]
\label{th:KKT}
Let $\bar u \in L^2(\Omega)$ be a control  with associated state $\bar y := S(\bar u)$.
Then $\bar u$ satisfies the Bouligand stationarity condition 
\eqref{eq:Bouligand_condition} 
if and only if 
there exists an adjoint state $\bar p$ 
such that the following KKT-system holds:
\begin{equation}
\label{eq:KKT}
\begin{gathered}
\bar u \in \Uad,\qquad \bar y \in H_0^1(\Omega),
\qquad \bar p \in V_{\bar y},
\\
-\Delta \bar y  
+  \sgn(\bar y)|\bar y|^{\alpha} = \bar u \text{ in } \Omega,
\quad \bar y = 0 \text{ on } \partial \Omega,
\\
(\bar p, z)_{H_0^1(\Omega)}
+
\alpha
\int_{\{\bar y  \neq 0\}} \frac{\bar p z}{|\bar y|^{1 - \alpha}} \dd x
=
\left \langle \partial_y J(\bar y, \bar u), z \right \rangle_{H_0^1(\Omega)}
\quad \forall z \in V_{\bar y},
\\
\left (
\bar p + \partial_u J(\bar y, \bar u),
u - \bar u
\right )_{L^2(\Omega)} \geq 0
\quad \forall u \in \Uad.
\end{gathered}
\end{equation}
Here, the state equation is again understood weakly and $V_{\bar y}$ is defined  
by 
\begin{equation}
\label{eq:Vy_def_again}
V_{\bar y}
:=
\Bigg \{
v \in H_0^1(\Omega)
\colon
\int_{\{\bar y  \neq 0\}} \frac{v^2}{|\bar y|^{1 - \alpha}} \dd x
< \infty
~
\text{and }
v=0 \text{ a.e.\ in } \{\bar y  = 0\}
\Bigg \}.
\end{equation}
\end{theorem}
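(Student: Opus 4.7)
The plan is to prove the equivalence by introducing the adjoint state $\bar p$ as the Riesz representative of $\partial_y J(\bar y, \bar u)$ in the Hilbert space $(V_{\bar y}, (\cdot, \cdot)_{V_{\bar y}})$ and then applying a standard adjoint trick to convert the action of $S'(\bar u)$ in the Bouligand inequality into an $L^2$-pairing against $\bar p$. Since both directions of the equivalence rest on the same identity, they can be handled in parallel.

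First I would note that $V_{\bar y} \hookrightarrow H_0^1(\Omega)$ continuously (with $\|v\|_{H_0^1(\Omega)} \leq \|v\|_{V_{\bar y}}$), so the map $z \mapsto \langle \partial_y J(\bar y, \bar u), z \rangle_{H_0^1(\Omega)}$ is a bounded linear functional on the Hilbert space $(V_{\bar y}, (\cdot, \cdot)_{V_{\bar y}})$. By the Riesz representation theorem there is a unique $\bar p \in V_{\bar y}$ satisfying the adjoint equation in \eqref{eq:KKT}. In the converse direction, this unique $\bar p$ is precisely the one postulated in \eqref{eq:KKT}. The state equation in \eqref{eq:KKT} is automatic since $\bar y = S(\bar u)$.

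The core computation is the following adjoint identity: for any $h \in H^{-1}(\Omega)$, the derivative $\delta := S'(\bar u)h$ lies in $V_{\bar y}$ and satisfies \eqref{eq:DirDiffPDE} by \cref{th:Sgateaux}. Testing the adjoint equation with $z = \delta$, testing \eqref{eq:DirDiffPDE} with $z = \bar p \in V_{\bar y}$, and using the symmetry of $(\cdot, \cdot)_{V_{\bar y}}$ yields
\begin{equation*}
\langle \partial_y J(\bar y, \bar u), S'(\bar u) h \rangle_{H_0^1(\Omega)}
= (\bar p, \delta)_{V_{\bar y}}
= (\delta, \bar p)_{V_{\bar y}}
= \langle h, \bar p \rangle_{H_0^1(\Omega)}.
\end{equation*}
Specializing to $h = u - \bar u \in L^2(\Omega)$ and using that $\bar p \in H_0^1(\Omega) \subset L^2(\Omega)$, so that the dual pairing collapses to the $L^2$-inner product through the pivot identification $H_0^1(\Omega) \hookrightarrow L^2(\Omega) \cong L^2(\Omega)^* \hookrightarrow H^{-1}(\Omega)$, gives
\begin{equation*}
\langle \partial_y J(\bar y, \bar u), S'(\bar u)(u - \bar u) \rangle_{H_0^1(\Omega)}
= (\bar p, u - \bar u)_{L^2(\Omega)}.
\end{equation*}

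With this identity in hand, the two directions become interchangeable: adding $(\partial_u J(\bar y, \bar u), u - \bar u)_{L^2(\Omega)}$ to both sides shows that \eqref{eq:Bouligand_condition} is equivalent to the variational inequality for $\bar u$ in \eqref{eq:KKT}. The step that I expect to require the most care is the verification that the Riesz argument applies in $V_{\bar y}$, which relies on the Hilbert-space structure of $(V_{\bar y}, (\cdot, \cdot)_{V_{\bar y}})$ already recorded in \cref{sec:4}, together with the pivot-space identification used in the final rewriting in terms of the $L^2$-inner product; beyond this bookkeeping, I do not anticipate any genuine obstacle.
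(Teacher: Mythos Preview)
Your proposal is correct and matches the paper's own proof essentially line for line: define $\bar p$ as the Riesz representative of $\partial_y J(\bar y,\bar u)$ in $(V_{\bar y},(\cdot,\cdot)_{V_{\bar y}})$, use \eqref{eq:DirDiffPDE} and symmetry to obtain $\langle \partial_y J(\bar y,\bar u), S'(\bar u)(u-\bar u)\rangle_{H_0^1(\Omega)} = (\bar p, u-\bar u)_{L^2(\Omega)}$, and conclude both directions from this identity. The extra details you spell out (the continuous embedding $V_{\bar y}\hookrightarrow H_0^1(\Omega)$ and the pivot-space rewriting) are implicit in the paper but helpful to make explicit.
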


\begin{proof}
Suppose that $\bar u$ and $\bar y := S(\bar u)$ satisfy \eqref{eq:Bouligand_condition}. 
Then the Riesz representation theorem implies that there exists a unique 
$\bar p$ satisfying 
\begin{equation}
\label{eq:barp_eq}
\bar p \in V_{\bar y},
\qquad 
(\bar p, z)_{V_{\bar y}}
=
\left \langle \partial_y J(\bar y, \bar u), z \right \rangle_{H_0^1(\Omega)}
\quad \forall z \in V_{\bar y}.
\end{equation}
Using \eqref{eq:DirDiffPDE}, we may compute that 
\begin{equation*}
\begin{aligned}
\left \langle \partial_y J(\bar y, \bar u), S'(\bar u)(u - \bar u) \right \rangle_{H_0^1(\Omega)}
&=
(\bar p,  S'(\bar u)(u - \bar u) )_{V_{\bar y}}
\\
&=
\left \langle u - \bar u, \bar p \right \rangle_{H_0^1(\Omega)}
=
(\bar p,   u - \bar u )_{L^2(\Omega)}
\end{aligned}
\end{equation*}
holds for all $u \in \Uad$. 
If we plug this identity into \eqref{eq:Bouligand_condition} 
and collect everything, then 
we obtain that  $\bar u$ and $\bar y$ satisfy \eqref{eq:KKT} as claimed. 

To prove that every tuple $(\bar u, \bar y)$
that satisfies \eqref{eq:KKT} for some $\bar p \in V_{\bar y}$
is a solution of the variational inequality \eqref{eq:Bouligand_condition},
we can proceed along the exact same lines in reverse. 
This establishes the assertion. 
\end{proof}

Note that
 the nondifferentiability  of the function $\R \ni s \mapsto \sgn(s)|s|^\alpha \in \R$
present in \eqref{eq:govPDE} manifests itself in \eqref{eq:KKT} in the form of the 
singular weight $|\bar y|^{\alpha - 1}\mathds{1}_{\{\bar y \neq 0\}}$ 
and the weighted Sobolev space $V_{\bar y}$. 
We remark that the appearance of these quantities 
significantly complicates the 
analysis of the regularity properties of the adjoint state $\bar p$.
In particular, 
it is typically 
completely unclear whether $\bar p$ possesses, e.g.,  some form of $W^{2,q}$-regularity;
cf.\ \cite{Turesson2000,Zhikov1998} and the discussion in \cref{rem:end_sec_4}.
Note that the question of whether  $W^{2,q}$-regularity can be obtained for $\bar p$  is of special interest
as it is crucial for the derivation of a-priori error estimates 
for finite element discretizations of \eqref{eq:P}. 
The only thing that can be  established rather easily for the adjoint state 
$\bar p$ in \eqref{eq:KKT} is improved $L^q(\Omega)$-regularity 
as the following result shows. 

\begin{corollary}[higher Lebesgue regularity for the adjoint state]
\label{cor:higher_reg_adjoint}
Suppose that $\bar u$, $\bar y$, and $\bar p$ satisfying 
\eqref{eq:KKT} are given and that 
$\partial_y J(\bar y, \bar u) \in L^s(\Omega)$ holds for 
some $\max(1, 2d/(d+2)) < s \leq \infty$. Define 
\[
\RR_s
:=
\begin{cases}
[1,\infty] &\text{ if } s > d/2,
\\
[1,\infty) &\text{ if } s = d/2,
\\
\displaystyle
\left [
1, \left (\frac{1}{s} - \frac{2}{d}\right)^{-1} \right )
&\text{ if } s < d/2.
\end{cases}
\]
Then it holds $\bar p \in L^r(\Omega)$ for all $r \in \RR_s$. 
\end{corollary}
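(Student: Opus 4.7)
The plan is to read the second line of the KKT-system as a weak formulation of the linear elliptic problem
\[
-\Delta \bar p + c\, \bar p = f \text{ in } \Omega, \qquad \bar p = 0 \text{ on } \partial \Omega,
\]
with nonnegative zeroth-order coefficient $c := \alpha |\bar y|^{\alpha-1} \mathds{1}_{\{\bar y \neq 0\}} \geq 0$ and right-hand side $f := \partial_y J(\bar y, \bar u) \in L^s(\Omega)$, and then to run a Stampacchia-type truncation argument that allows the (possibly highly singular) coefficient $c$ to be discarded from the estimates.

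The preparatory step is to show that every post-composition $z := \phi(\bar p)$ with a bounded, Lipschitz continuous, nondecreasing $\phi \colon \R \to \R$ satisfying $\phi(0) = 0$ is an admissible test function, i.e., lies in $V_{\bar y}$. The chain rule and $\phi(0)=0$ yield $\phi(\bar p) \in H_0^1(\Omega)$; the property $\bar p = 0$ a.e.\ on $\{\bar y = 0\}$ built into $\bar p \in V_{\bar y}$ together with $\phi(0)=0$ force $\phi(\bar p) = 0$ a.e.\ on $\{\bar y = 0\}$; and the Lipschitz bound $|\phi(\bar p)| \leq \|\phi'\|_\infty |\bar p|$ ensures that the weighted $L^2$-integral of $\phi(\bar p)$ over $\{\bar y \neq 0\}$ is finite. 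Plugging $z = \phi(\bar p)$ into the adjoint equation and exploiting $\bar p \phi(\bar p) \geq 0$ (from monotonicity of $\phi$) to drop the nonnegative weighted integral leaves the clean Stampacchia inequality
\[
\int_\Omega \phi'(\bar p) |\nabla \bar p|^2 \dd x \leq \int_\Omega f\, \phi(\bar p) \dd x,
\]
which is exactly what one obtains by testing $-\Delta u = f$ with $\phi(u)$. In other words, the singular weight has been completely eliminated from the analysis.

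From here I would run the classical De~Giorgi--Stampacchia--Moser machinery. Taking $\phi(t) = G_k(t) := \sgn(t)(|t|-k)^+$ and tracking the decay of the super-level set measure $|\{|\bar p| > k\}|$ as $k \to \infty$ in Stampacchia's standard manner delivers $\bar p \in L^\infty(\Omega)$ whenever $s > d/2$. For $s \leq d/2$, iterating with truncated power-type test functions $\phi_{q,M}(t) := \sgn(t) \min(|t|^{q-1}, M)$ and letting $q$ grow in the standard Moser fashion produces $\bar p \in L^r(\Omega)$ for every $r$ in the full range $\RR_s$; the strict exclusion of the upper endpoint when $s \leq d/2$ originates from the same borderline phenomenon that appears for the pure Poisson equation on bounded Dirichlet domains.

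The only step that departs from the classical template is the admissibility verification $\phi(\bar p) \in V_{\bar y}$ in the preparatory paragraph; this is the technical heart of the argument, since if one is not careful the truncations could fail either the vanishing condition on $\{\bar y = 0\}$ or the weighted integrability. Once the admissibility is settled and the weighted integral is discarded by monotonicity, the nonnegative coefficient $c$ never reappears in any of the estimates, so the standard $L^q$-scale for the Dirichlet Laplacian carries over verbatim and yields exactly the range $\RR_s$ asserted.
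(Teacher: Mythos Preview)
Your approach is correct and coincides with the paper's proof in all essential points: both arguments verify that Stampacchia-type truncations of $\bar p$ lie in $V_{\bar y}$, test the adjoint equation with them, and discard the singular weighted integral by sign (your $\bar p\,\phi(\bar p)\geq 0$ observation is exactly the inequality $(\bar p_k,\bar p_k)_{V_{\bar y}}\leq (\bar p,\bar p_k)_{V_{\bar y}}$ that the paper uses with $\bar p_k := G_k(\bar p)$). The only difference is packaging: after obtaining the clean inequality $c_q\|\bar p_k\|_{L^q(\Omega)}^2 \leq \int_\Omega f\,\bar p_k\,\dd x$, the paper immediately invokes a black-box lemma (\cite[Lemma~3.4]{ChristofWachsmuth2023}) that encapsulates both the $L^\infty$-case and the sub-critical bootstrapping, whereas you spell out the De~Giorgi--Stampacchia level-set argument for $s>d/2$ and the Moser iteration with truncated powers for $s\leq d/2$; your more general admissibility check for arbitrary bounded Lipschitz nondecreasing $\phi$ with $\phi(0)=0$ is what makes the Moser test functions $\phi_{q,M}$ available and is a nice self-contained replacement for the cited lemma.
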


\begin{proof}
To establish the asserted higher Lebesgue regularity of $\bar p$, we use 
a classical argumentation of Stampacchia that relies on the analysis
of ``truncations'' of the function $\bar p$, the Sobolev embeddings, 
and the study of the measure of sub- and superlevel sets; see \cite[Appendix II-B]{KinderlehrerStampacchia1980}.
Define $\bar p_k := \bar p - \min(k, \max(-k,\bar p))$, $k \geq 0$.
Then $\bar p_k$ is an element of $V_{\bar y}$ for all $k\geq 0$ since 
$\bar p_k = 0 - \min(k, \max(-k,0)) = 0$ holds a.e.\ in $\{\bar y = 0\}$,
since $\bar p_k \in H_0^1(\Omega)$ holds by \cite[Theorem 5.8.2]{Attouch2006},
and since
\[
\int_{\{\bar y  \neq 0\}} \frac{\bar p_k^2}{|\bar y|^{1 - \alpha}} \dd x
\leq
\int_{\{\bar y  \neq 0\}} \frac{\bar p^2}{|\bar y|^{1 - \alpha}} \dd x
< \infty \qquad \forall k \geq 0. 
\]
In particular, $\bar p_k$ is a valid test function in \eqref{eq:barp_eq}.
In combination with \eqref{eq:sob_emb_primal} and
again \cite[Theorem 5.8.2]{Attouch2006}, 
this allows us to conclude that 
\begin{equation*}
c_q \|\bar p_k \|_{L^q(\Omega)}^2
\leq
\|\bar p_k \|_{H_0^1(\Omega)}^2
\leq
(\bar p_k, \bar p_k )_{V_{\bar y}}
\leq
(\bar p, \bar p_k )_{V_{\bar y}}
=
\int_\Omega
\partial_y J(\bar y, \bar u)
\bar p_k\,
\dd x
\end{equation*}
holds for all $q \in \QQ_d$ with some constants $c_q > 0$ and
 $\QQ_d$ 
as in \eqref{eq:sob_emb_primal}.
To obtain the assertion of the corollary,
it now suffices to invoke 
\cite[Lemma 3.4]{ChristofWachsmuth2023},
which, by means of the aforementioned analysis of 
the measure of the sub- and superlevel sets of $\bar p$,  
establishes that the inequality 
\begin{equation*}
\|\bar p_k \|_{L^q(\Omega)}^2
\leq
\frac{1}{c_q}
\int_\Omega
\left | \partial_y J(\bar y, \bar u)
\bar p_k \right |\,
\dd x\qquad \forall k \geq 0
\qquad \forall q \in \QQ_d
\end{equation*}
with $\partial_y J(\bar y, \bar u) \in L^s(\Omega)$
implies that $\bar p \in L^r(\Omega)$ holds for all $r \in \RR_s$.
\end{proof}

Prototypical examples of functions $J$ that are covered by \cref{cor:higher_reg_adjoint} 
are tracking-type 
objectives of the form 
\[
J\colon H_0^1(\Omega) \times L^2(\Omega) \to \R,
\qquad
J(y,u)
:= 
\frac12 \|y - y_D\|_{L^2(\Omega)}^2
+
\frac{\nu}{2} \|u\|_{L^2(\Omega)}^2,
\]
involving a desired state $y_D \in L^2(\Omega)$ and a Tikhonov 
parameter $\nu > 0$. 
For this type of objective function, we obtain 
from 
\cref{th:KKT} and 
\cref{cor:higher_reg_adjoint} 
that the adjoint state $\bar p$ associated with a Bouligand stationary control $\bar u$
has to satisfy $\bar p \in V_{\bar y} \cap L^r(\Omega)$ for all $r \in \RR_d$, where $\RR_d$ is
given by 
\[
\RR_d
:=
\begin{cases}
[1,\infty] &\text{ if } d < 4,
\\
[1,\infty) &\text{ if } d = 4,
\\
\displaystyle
\left [
1,  \frac{2d}{d-4} \right )
&\text{ if } d>4,
\end{cases}
\]
where $V_{\bar y}$ is defined as in \eqref{eq:Vy_def_again},
and where $d$ is the dimension of $\Omega$.  
If, additionally, we assume that  $\Uad$ has the form 
$\Uad = \{u \in L^2(\Omega) \colon u_a \leq u \leq u_b \text{ a.e.\ in }\Omega\}$
for some $u_a, u_b \in H^1(\Omega)$
satisfying $u_a \leq 0 \leq u_b$ a.e.\ in $\Omega$, 
then the projection formula 
\[
\bar u =
\max\left ( u_a, 
\min\left ( u_b, -\frac{\bar p}{\nu} \right ) \right )
\]
obtained from the last line in \eqref{eq:KKT} and Stampacchia's lemma \cite[Theorem 5.8.2]{Attouch2006}
also imply that $\bar u$ inherits all of these regularity properties from 
$\bar p$, i.e., we have 
\[
\bar u \in V_{\bar y} \cap L^r(\Omega) \qquad \forall r \in \RR_d.
\]
As already mentioned, whether optimal controls $\bar u$ and their associated 
adjoint states $\bar p$ possess higher regularity properties than those above 
in the situation of \eqref{eq:P} (e.g., Lipschitz regularity or a form of $W^{2,q}$-regularity)
is an open problem.
The same is true for the derivation of (no-gap) second-order optimality and quadratic growth 
conditions for problems of the type \eqref{eq:P}.
We leave these topics for future research. 


\bibliographystyle{alpha}
\bibliography{references}

\medskip
Received xxxx 20xx; revised xxxx 20xx.
\medskip

\end{document}